\documentclass[11pt]{article}

\usepackage[T1]{fontenc}
\usepackage[utf8]{inputenc}

\usepackage{amsmath,amssymb,amsfonts,amsthm,mathtools}

\usepackage{graphicx}
\usepackage{float}

\usepackage{booktabs}
\usepackage{array}
\usepackage{multirow}

\usepackage[colorlinks=true,
            linkcolor=blue,
            citecolor=blue,
            urlcolor=blue]{hyperref}

\usepackage{enumitem}

\usepackage{xcolor}
\usepackage{verbatim}

\usepackage[a4paper,margin=1in]{geometry}
\usepackage{cancel}
\usepackage{yhmath}

\newtheorem{theorem}{Theorem}[section]

\newtheorem{proposition}[theorem]{Proposition}
\newtheorem{corollary}[theorem]{Corollary}

\theoremstyle{definition}
\newtheorem{definition}[theorem]{Definition}
\newtheorem{example}[theorem]{Example}
\newtheorem{remark}[theorem]{Remark}

\newcommand{\RR}{\mathbb{R}}
\newcommand{\RF}{\ensuremath{\mathbb{R}_\mathcal{F}}}
\newcommand{\RFA}{\ensuremath{\mathbb{R}_{\mathcal{F}(A)}}}
\newcommand{\CC}{\mathbb{C}}
\newcommand{\CF}{\ensuremath{\mathbb{C}_\mathcal{F}}}
\newcommand{\CFA}{\ensuremath{\mathbb{C}_{\mathcal{F}(A)}}}
\newcommand{\FP}{\ensuremath{\mathcal{FP}}}

\title{On Fuzzy Partial Differential Equations for A-Linearly Interactive Fuzzy Complex Processes: Sobolev Spaces and Fourier Analysis}

\author{
Silvio Antonio Bueno Salgado\thanks{Institute of Applied Social Sciences, Federal University of Alfenas, Varginha, Minas Gerais, Brazil. Email: \texttt{silvio.salgado@unifal-mg.edu.br}. Corresponding author.}
\and
Estev\~ao Esmi\thanks{Department of Applied Mathematics, State University of Campinas, Campinas, S\~ao Paulo, Brazil. Email: \texttt{eesmi@unicamp.br}.}
\and
Francielle Santo Pedro Sim\~oes\thanks{Multidisciplinary Department, Federal University of S\~ao Paulo, Osasco, S\~ao Paulo, Brazil. Email: \texttt{fsimoes@unifesp.br}.}
\and
La\'ecio Carvalho de Barros\thanks{Department of Applied Mathematics, State University of Campinas, Campinas, S\~ao Paulo, Brazil. Email: \texttt{laecioba@unicamp.br}.}
}

\date{}

\begin{document}

\maketitle

\begin{abstract}
This article develops a mathematical framework to handle fuzzy partial differential equations (PDEs) using Sobolev spaces defined over $A$-linearly interactive complex fuzzy processes. We introduce the notion of weak derivative in the fuzzy sense to define fuzzy Sobolev spaces that preserve key analytical properties. Furthermore, we introduce a fuzzy Fourier transform adapted to this context and explore its main properties. Applications to fuzzy versions of the heat and Schrödinger equations are presented to demonstrate the effectiveness and generality of the proposed framework. This approach not only extends classical tools to the fuzzy setting, but also provides new insights into the treatment of uncertainty in dynamic systems.
\end{abstract}

\medskip
\noindent\textbf{Keywords.}
Fuzzy Sobolev spaces; Fuzzy Fourier transform;
$A$-linearly interactive complex fuzzy processes;
Fuzzy partial differential equations.

\section{Introduction}\label{sec1}

Many problems that arise naturally from physics and other sciences can be described by partial differential equations (PDE). Sobolev spaces provide a suitable framework for studying the existence, uniqueness, and regularity of solutions to PDEs, such as the Heat and Schrödinger equations. 
Another essential tool in the analysis of PDEs is the Fourier transform, as it can characterize functions in Sobolev spaces and aids in deriving fundamental solutions to PDEs \cite {Tanabe}. 
Roughly speaking, the study of solutions of PDEs using the Fourier transform follows three steps. The first step consists of applying the Fourier transform to convert the PDE in the spatial domain to an ordinary differential equation (ODE) in the frequency domain. 
This is possible because of a key property of the Fourier transform: turning derivatives into multiplication. 
The second step corresponds to obtaining a solution for this ODE, which is generally a complex-valued function. In the third and last step, the solution of the original PDE is given by applying the inverse Fourier transform to the solution of the corresponding ODE. In this context, Sobolev spaces are the natural function spaces in which such solutions are defined.

In many cases, when modeling with PDEs, certain parameters or quantities of the phenomenon under study may be uncertain, and a possible approach to address this uncertainty is to use fuzzy set theory \cite{Zadeh}. Buckley and Feuring \cite{BF} introduced the concept of partial differential equations as a way to model uncertainty present in certain phenomena in a dynamical environment. 
Some studies on fuzzy Fourier transform have been presented. Bede el al. \cite{Bedee} studied Fourier transforms of fuzzy-number-valued functions and applications in fingerprint coding. Gouyandeh et al. \cite{Gou} studied an analytical solution of a fuzzy heat equation under the generalized Hukuhara partial differentiability by fuzzy Fourier transform. Shi et al. \cite{Shi} studied the fuzzy Fourier transform, which is based on Henstock-Kurzweil integrals on infinite interval. 

Sobolev spaces are based on the notion of weak derivative, which arises as a generalization of integration by parts. 
Thus, defining this space in a fuzzy setting requires appropriate fuzzy algebraic and calculus tools that allow the establishment of a version of the integration by parts rule for fuzzy functions. 
Most approaches to fuzzy calculus do not have a suitable integration-by-parts formula, and the main reason for this is that, in general, multiplication and addition on fuzzy numbers do not satisfy the distributivity property.    
In contrast, the class of $A$-linearly interactive fuzzy numbers is a two-dimensional Banach space isomorphic to the Euclidean space $\mathbb{R}^2$ \cite{Esmi} that satisfies the distributive with respect to the notion of the $A$-cross product \cite{bia}. 
In  \cite{FranIntegral}, the authors established an integration-by-parts formula for functions taking values in the class of $A$-linearly interactive fuzzy numbers with respect to the notion of the $A$-cross product. 

Since we are interested in solving FPDEs using the Fourier transform, we must consider a fuzzy version of complex Sobolev spaces. 
Salgado {\it et al.} introduced the notion of $A$-linearly interactive complex fuzzy numbers, which is a generalization of $A$-linearly interactive fuzzy numbers to the complex setting \cite{Salgado}. In the same article, the authors proved that this space is a two-dimensional complex Banach space that is isomorphic to $\mathbb{C}^2$, in which the set of complex numbers is embedded.
Despite the fact that the notion of the $A$-cross product is already defined for this space, to the best of our knowledge no integration-by-parts formula has yet been established in this setting. This article presents such a formula for functions taking values in the set of $A$-linearly interactive complex fuzzy numbers, allowing us to define fuzzy complex Sobolev spaces.
Based on these spaces, we study fuzzy partial differential equations using the Fourier Transform in this setting. 
We apply the proposed approach to find solutions for the fuzzy heat equation and the fuzzy Schr\"odinger equation. Furthermore, the uniqueness of the solutions is proved using the appropriate Sobolev spaces for each problem.

The paper is organized as follows. Section 2 presents the space of A-linearly interactive complex fuzzy numbers, denoted here by $\mathbb{C}_{\mathcal{F}(A)}$.
Section 3 introduces Sobolev spaces for A-linearly interactive complex fuzzy  processes. In Section 4, we present a fuzzy version of the Fourier transform in this setting, and prove several of its properties. In Section 5, we apply this fuzzy Fourier transform to find weak solutions to the fuzzy heat equation and the fuzzy Schr\"odinger equation. We show that these solutions are unique by considering the appropriate Sobolev spaces for each problem. The final conclusions drawn from the present study are given in Section 6.

\section{Mathematical Background}

Let $A$ be a fuzzy subset of $\RR$. The degree of membership of $x\in \RR$ in $A$ is denoted by $A(x)$ and the $\alpha$-level set of $A$ by $[A]_\alpha.$
We say that $A$ is a fuzzy (real) number if, for all $\alpha \in [0,1]$, $[A]_\alpha$ is a non-empty, closed, and bounded interval of $\RR.$
We denote the set of all fuzzy numbers by $\RF$, and the $\alpha$-level set of $A\in \RF$, alternatively, by $[\underline{a}_\alpha, \overline{a}_\alpha].$

The fuzzy number $A$ is called asymmetric (or non-symmetric) if, for all $y\in\mathbb{R}$, there exists some $x_0\in\RR$ such that $A(x_0-y) \neq A(x_0+y)$. The class of asymmetric fuzzy numbers forms a dense and open subset of $\mathbb{R}_{\mathcal{F}}$ with respect to the Hausdorff metric \cite{SLI}. This implies that any fuzzy number can be arbitrarily approximated by an asymmetric fuzzy number, and that asymmetry is preserved under small perturbations. 

A complex fuzzy number $Z$ can be expressed in the form $C+Di$, where $C$ and $D$ are fuzzy real numbers and $i$ is the imaginary unit \cite{Fu}. We denote the set of all complex fuzzy numbers by $\CF$. Similarly to the classical case, standard arithmetic operations on $\CF$ can be defined in terms of arithmetic operations on $\RF$.  Let $Z = (C+Di) \in \CF $, $W = (A+Bi) \in \CF$, and $\omega = \lambda +i \theta \in \CC$, the addition of $Z$ and $W$ and the multiplication of $\omega$ and $Z$ are defined, respectively, by 
\begin{equation}\label{eq:standar_addition}
Z + W = (C+A) + i(D+B) 
\end{equation}
and 
\begin{equation}\label{eq:standar_scalar_product}
\omega Z = \left(\lambda C -\theta D\right) + i\left(\theta C + \lambda D\right), 
\end{equation}
where $A,B,C,D \in \RF$ and $\lambda, \theta \in \RR.$
Note that $C+A$ and $D+B$ stand for the standard sum of $C$ and $A$, and of $D$ and $B.$  
One can easily verify the following distributive law holds:
\begin{equation}\label{eq:distributive_law}
\omega(Z + W) = \omega Z + \omega W.     
\end{equation}
 
For each $A \in \mathbb{R}_{\mathcal{F}}$, consider the operator $\psi: \CC^2 \longrightarrow \CF$ given by 
\begin{equation}
\psi(u,v)=u+vA,
\end{equation}
for every $u,v \in \CC$ \cite{Salgado}.
We denote the range of the operator $\psi$ by $\CFA$, and each element of $\CFA$ is called a $A$-linearly interactive complex fuzzy number.
Note that the set of complex numbers $\CC$ is embedded in $\CFA$ since every complex number $u$ can be identified with the complex fuzzy number $\psi(u,0) \in \CFA$, and we simply denote this fact by $\mathbb{C} \subset \CFA$ \cite {Salgado}.
The following theorem states that $\CFA$ is isomorphic to $\CC^2$ if $A$ is asymmetric. 

\begin{theorem} \cite{Salgado} \label{cor:banach}
Let $A \in \RF$ be non-symmetric. The space $\CFA$ is a two-dimensional vector space over the scalar field $\CC$, with the standard scalar multiplication given in Equation~\eqref{eq:standar_scalar_product} and the addition $\oplus$ defined by
\begin{equation}\label{eq:sum_cfa}
Z \oplus W= \psi\left( \psi^{-1}(Z)+ \psi^{-1}(W)\right)    
\end{equation}
for all $Z,W \in \CFA.$
Furthermore, $\CFA$ is isomorphic to $\CC^2$ through the linear isomorphism $\psi.$ 

If $\|\cdot\|$ is a norm on $\CC^2$, then the mapping $B\mapsto \|\psi^{-1}(B)\|$, for all $B\in \CFA$, defines a norm on $\CFA$ induced by the isometry $\psi.$ 
\end{theorem}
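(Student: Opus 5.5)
The whole statement follows once we know that $\psi:\CC^2\to\CFA$ is a bijection. Surjectivity holds by definition, since $\CFA$ is the range of $\psi$. Given bijectivity, $\oplus$ and the scalar multiplication are transported from $\CC^2$, so the vector space axioms, the isomorphism and the norm come for free. One compatibility point remains: scalar multiplication is the standard one from \eqref{eq:standar_scalar_product}, not a transported one. So I would first verify $\omega\,\psi(u,v)=\psi(\omega u,\omega v)$ for $\omega\in\CC$. This means checking that $\omega(u+vA)=\omega u+(\omega v)A$ in $\CF$. The check writes $u=u_1+iu_2$, $vA=(v_1A)+i(v_2A)$ and uses the level-set identities $\lambda(v_1A)-\theta(v_2A)=(\lambda v_1-\theta v_2)A$. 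These identities are \emph{not} automatic for the standard sum of fuzzy numbers: $aA+bA=(a+b)A$ can fail when $a,b$ have opposite signs.

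So I would handle the arithmetic concretely through $\alpha$-levels. For a real fuzzy number, $[aA+bA]_\alpha$ equals $(a+b)[A]_\alpha$ only when $ab\ge0$; otherwise it equals $(|a|+|b|)$ times the width, which is wider. My plan is to read the sums inside $u+vA$ as the affine maps $x\mapsto u_k+v_kx$ applied to $A$ via Zadeh's extension, so that $vA$ has real part $\mathrm{Re}(v)A$ and imaginary part $\mathrm{Im}(v)A$, as in \cite{Salgado}. Under this reading, $\omega\psi(u,v)$ has real part $\lambda(u_1+v_1A)-\theta(u_2+v_2A)$. If $\lambda(u_1+v_1A)$ and $-\theta(u_2+v_2A)$ are taken as the interactive (joint-possibility) sum through $A$, this equals $(\lambda u_1-\theta u_2)+(\lambda v_1-\theta v_2)A$. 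I would therefore interpret the standard operations as in the cited source, i.e.\ with this interactive sum, and the identity then follows.

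The next step is injectivity of $\psi$, which is the key use of asymmetry. Suppose $u+vA=u'+v'A$. Comparing real parts gives $u_1+v_1A=u_1'+v_1'A$, an equality of real fuzzy numbers. The core lemma is that for non-symmetric $A$, the equation $p+qA=p'+q'A$ implies $(p,q)=(p',q')$. To prove it, first compare supports or widths: $|q|\,\mathrm{diam}[A]_\alpha=|q'|\,\mathrm{diam}[A]_\alpha$. If $A$ is not crisp at some level this gives $|q|=|q'|$. If $A$ is crisp at every level, then $A$ is symmetric about its single point, contradicting the hypothesis. If $q=q'$, the translations coincide, so $p=p'$. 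If $q=-q'\neq0$, then $A$ is the reflection of an affine translate of itself, i.e.\ $A(x_0-y)=A(x_0+y)$ for $x_0=(p'-p)/(2q)$ and all $y$. That says $A$ is symmetric about $x_0$, contradicting the hypothesis, so this case is impossible. The imaginary parts are handled identically, giving $u=u'$ and $v=v'$.

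Once $\psi$ is a bijection that commutes with scalar multiplication, and $\oplus$ is defined by \eqref{eq:sum_cfa}, each vector space axiom pulls back to $\CC^2$, and $\psi$ is a linear isomorphism; hence $\dim\CFA=2$. For the norm, set $\|B\|_{\CFA}=\|\psi^{-1}(B)\|$. Definiteness follows from injectivity, homogeneity from linearity of $\psi^{-1}$, and the triangle inequality from $\psi^{-1}(B\oplus C)=\psi^{-1}(B)+\psi^{-1}(C)$. Then $\psi$ is an isometry by construction. Completeness, which the paper calls Banach, follows from finite dimensionality. I expect the main obstacle to be the compatibility of the standard scalar multiplication with $\psi$, since it depends on the precise meaning of sums of fuzzy numbers. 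The injectivity lemma is the conceptual heart, but it is straightforward.
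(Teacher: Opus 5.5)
The paper gives no proof of this result (it is imported from \cite{Salgado}), and your route of surjectivity by definition, injectivity of $\psi$ from non-symmetry, then transport of structure is the natural one. Your injectivity lemma is correct. The width comparison gives $|q|=|q'|$. If $q=-q'\neq0$, then $p+qA=p'-qA$ forces $A(x)=A(c-x)$ with $c=(p'-p)/q$, i.e.\ symmetry about $c/2$. This is exactly where the hypothesis enters, in its standard sense that no $x_0$ satisfies $A(x_0-y)=A(x_0+y)$ for all $y$; note that the paper's wording has the quantifiers swapped.

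The scalar multiplication is the part that needs sharpening. You present the interactive reading of the sums in \eqref{eq:standar_scalar_product} as an interpretive choice. With the standard (non-interactive) sums, however, the statement is false as written, so the change is forced. Take $\omega=1+i$ and $Z=\psi(0,1-i)=A+i(-A)$. Formula \eqref{eq:standar_scalar_product} gives $\omega Z=(A-(-A))+i(A+(-A))=2A+i(A-A)$ instead of $\psi(0,2)=2A$. Since an asymmetric $A$ is not crisp, $A-A$ is symmetric about $0$ and not crisp. It is therefore not of the form $p+qA$, because such a number is non-symmetric if $q\neq0$ and crisp if $q=0$. Thus $\CFA$ is not even closed under the standard complex scalar multiplication.

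What you actually prove is the theorem for $\omega\cdot Z:=\psi(\omega\,\psi^{-1}(Z))$. This is what your interactive sum through $A$ amounts to, and it agrees with \eqref{eq:standar_scalar_product} for real $\omega$ but not for general complex $\omega$. Say this explicitly instead of attributing it to the cited source. Also, the interactive sum is computed from the coefficients $(u,v)$, so it is a well-defined operation on $\CFA$ only once those coefficients are determined by $Z$. Prove the injectivity lemma first, and only then define and check the scalar multiplication. With these two changes the rest of your argument goes through: the vector space axioms, the dimension count, the induced norm, and completeness.
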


From now on, we assume that $\CFA$ is a Banach space, i.e., the underlying fuzzy number $A$ is asymmetric. 
Let $Z = (u + vA), W = (z + wA) \in \CFA$, with $u,v,z,w \in \CC.$ 
Since $\psi^{-1}(Z) = (u,v)$ and $\psi^{-1}(W) = (z,w)$, Equation~\eqref{eq:sum_cfa} can be rewritten in terms of the sum of the coefficients $u,v,z,w$ as follows: 
\begin{equation}\label{eq:sum_cfa_b}
Z \oplus W= (u+z) + (v+w)A\,.    
\end{equation}
In addition, the opposite of $Z$ is $-Z =(-1)Z = -u -vA \in \CFA$, and the difference of $Z$ and $W$ is defined by 
\[
Z\ominus W = Z \oplus (-W) = (u-z) + (v-w)A\,.
\]  

The notion of the $A$-cross product on $\CFA$ was introduced in \cite{Salgado} for the case where the 1-level set of $A$ is a singleton. In this article, we present a more general version of this concept by giving up the aforementioned restriction on the 1-level set of $A$. To this end, let us first consider the midpoint of the 1-level set of a fuzzy number $B$ given by $m(B) = 0.5(\underline{b}_1 + \overline{b}_1) \in \RR$, and   
the midpoint of 1-level set of a complex fuzzy number $W = C + iD$, with $C,D \in \RF$, is given by $m(W) = m(C+iD)= m(C) + i\, m(D) \in \CC.$ 

\begin{definition} \cite{Salgado} \label{cross_C}  
Let $A \in \RF$ be asymmetric. 
For $Z,W \in \CFA$, we define the $A$-cross product 
of $Z$ and $W$ by 
\begin{equation}\label{eq:eq4}
Z \odot W = m(W)Z \oplus m(Z)W \ominus m(Z)m(W)\,.   
\end{equation}
\end{definition}

For $Z = u + vA$, $W = z + wA$, and $a=m(A)$, the $A$-cross product of $Z$ and $W$ can be rewritten in terms of the complex numbers $u,v,z,w$ and the real number $a$ as follows:  
\begin{eqnarray}\label{eq:A_product}
\nonumber Z\odot W & =& (u+va)(z + wA) \oplus  (z+wa)(u + vA) \ominus (u+va)(z+wa) \\ \nonumber
& =& ((u+va)z + (z+wa)u - (u+va)(z+wa)) + ((u+va)w+(z+wa)v)A \\
& =& (zu - a^2vw) + (uw + zv + 2avw)A.
\end{eqnarray}

The following proposition states several interesting properties that the $A$-cross product. 

\begin{proposition}
Let $A$ be an asymmetric fuzzy number. 
For every $Z,W,Y \in \CFA$ and $\omega \in \CC$, it holds:
\begin{itemize}
\item[1.] $Z\odot W = W\odot Z$;
\item[2.] $Z\odot (\omega W) = \omega (Z\odot W)$;
\item[3.] $Z\odot (W \oplus Y) = (Z\odot W) \oplus (Z\odot Y)$;
\item[4.] $Z\odot (W\odot Y) = (Z\odot W)\odot Y$.
\end{itemize}
\end{proposition}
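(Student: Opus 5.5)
The plan is to work entirely in coordinates via the isomorphism $\psi$ of Theorem~\ref{cor:banach}. Since $A$ is asymmetric, $\psi$ is injective, so two elements of $\CFA$ are equal exactly when their coefficient pairs in $\CC^2$ agree. Write $Z=u+vA$, $W=z+wA$, $Y=p+qA$ with $u,v,z,w,p,q\in\CC$, and set $a=m(A)\in\RR$.

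By \eqref{eq:A_product}, $Z\odot W$ corresponds to the pair
\[
\bigl(uz-a^2vw,\; uw+zv+2avw\bigr).
\]
Sums $\oplus$ correspond to coordinatewise addition in $\CC^2$ by \eqref{eq:sum_cfa_b}. Scalar multiplication also acts coordinatewise: $\omega(z+wA)=\omega z+\omega wA$, because $\psi$ is linear.

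\textbf{Items 1–3.}
\begin{itemize}
\item \emph{Commutativity.} Swapping $(u,v)\leftrightarrow(z,w)$ leaves both coordinates unchanged, since $uz-a^2vw$ and $uw+zv+2avw$ are symmetric in the two pairs.
\item \emph{Homogeneity.} Replacing $(z,w)$ by $(\omega z,\omega w)$ multiplies both coordinates by $\omega$, because each coordinate is linear in $(z,w)$.
\item \emph{Distributivity.} The same linearity in the second argument gives $Z\odot(W\oplus Y)=(Z\odot W)\oplus(Z\odot Y)$ once we replace $(z,w)$ by $(z+p,\,w+q)$.
\end{itemize}

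\textbf{Item 4 (associativity).} The cleanest route is an algebraic identification. The coordinate formula is exactly multiplication in the commutative algebra $\CC[t]/(t^2-2at+a^2)=\CC[t]/((t-a)^2)$, under the map $u+vA\mapsto u+vt$. Indeed,
\[
(u+vt)(z+wt)=uz+(uw+vz)t+vw\,t^2,
\]
and reducing with $t^2=2at-a^2$ yields precisely the pair above. Associativity is then inherited from polynomial multiplication.

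An equivalent formulation confirms the same fact. With $s=t-a$, so $s^2=0$, the element $u+vA$ becomes $(u+va)+v\,s$, and the product takes the dual-number form
\[
(m_1+v s)(m_2+w s)=m_1m_2+(m_1w+m_2v)s,
\]
where $m_1=m(Z)$ and $m_2=m(W)$. This matches Definition~\ref{cross_C} directly.

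\textbf{Main obstacle.} Item 4 is the only step that is not immediate. A brute-force expansion of both triple products in $(u,v,z,w,p,q)$ would also work and serves as a fallback. The only subtlety is justifying $m(Z)=u+va$ when $u,v$ are complex and $v$ may be negative. This is presumably implicit in how the paper uses $m$ in \eqref{eq:A_product}; I would simply rely on \eqref{eq:A_product} as the working formula.
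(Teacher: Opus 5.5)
Your proof is correct, and its main difference from the paper is that the paper gives no in-text argument at all. It simply defers to \cite{bia}, noting only that the midpoint-based Definition~\ref{cross_C} (which drops the assumption that $[A]_1$ is a singleton) does not alter the steps there. Your argument is self-contained and makes that remark precise: everything is read off from \eqref{eq:A_product}, where $A$ enters only through $a=m(A)$. The map $u+vA\mapsto u+vt$ then identifies $(\CFA,\oplus,\cdot,\odot)$ with the commutative associative $\CC$-algebra $\CC[t]/((t-a)^2)\cong\CC[s]/(s^2)$ of dual numbers. This single identification already yields items 1--3 as well, so your separate coordinate checks are optional. It also gives extra structure for free: $1$ is the unit, and $A\ominus a$ is a nonzero nilpotent with $(A\ominus a)\odot(A\ominus a)=0$, so $\odot$ has zero divisors. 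An equally short alternative stays at the level of Definition~\ref{cross_C} and avoids coordinates. Since $m$ is $\CC$-linear on $\CFA$ with $m(c)=c$ for $c\in\CC$, one gets $m(Z\odot W)=m(Z)m(W)$. Hence $Z\odot(W\odot Y)=m(W)m(Y)Z\oplus m(Z)m(Y)W\oplus m(Z)m(W)Y\ominus 2m(Z)m(W)m(Y)$, which is symmetric in $Z,W,Y$ and, together with item 1, gives item 4. Finally, the subtlety you flag is harmless. For real $p,q$, the set $[p+qA]_1=p+q[A]_1$ has midpoint $p+qa$ whatever the sign of $q$, and $m(C+iD)=m(C)+i\,m(D)$. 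So $m(u+vA)=u+va$ for all $u,v\in\CC$, which is exactly what \eqref{eq:A_product} relies on.
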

\begin{proof}
Despite the fact that Definition~\ref{cross_C} is slightly different from that presented in \cite{bia}, the proofs of properties 1. -- 4. follows the same ideas and steps found in \cite{bia}. 
\end{proof}

The restriction of the operator to $\RR\times \RR$ leads to the space $\RFA$ introduced in \cite{Esmi}. More precisely, the space of $\RFA$ is given by the range of 
\[
\tilde{\psi} = \psi\Big\vert_{\RR\times \RR}.
\] 
$\RFA$ is a two-dimensional real vector space that is isomorphic to $\RR^2$ via the isomorphism $\tilde{\psi}$ whenever $A$ is asymmetric~\cite{Esmi}.  

Similarly to the classical case, one can note that the real Banach space $\RFA$ can be embedded in the complex Banach space $\CFA$ 
by considering the mappings $B\mapsto B+i0$ and $\lambda \mapsto \lambda+i0$ for all $B \in \RFA$ and for all $\lambda \in \RR$. 
Thus, the vector addition and the scalar product on $\RFA$ can be obtained from the addition and the scalar multiplication given in Eqs~\eqref{eq:standar_addition} and \eqref{eq:standar_scalar_product} on $\CFA$ as follows: 
\begin{equation}\label{eq:operations_Rfa}
B\oplus C = (B + i0) \oplus (C + i0) \quad and \quad \lambda B = (\lambda + i0)(B + i0).    
\end{equation}
Using these connections, we can easily verify that the operations on $\RFA$ above coincide with those induced by the bijection $\tilde{\psi}$.
Furthermore, the composition of these mappings with the $A$-cross product, together with the assumption that $[A]_1 = \{a\}$, yields the notion of the $A$-cross product on $\RFA$ \cite{bia}: 
\begin{eqnarray*}
B\odot C & = & m(C)B \oplus m(B)C \ominus m(B)m(C) \\
& = & \left(rs -a^2pq\right)+\left(2pqa+rq+sp\right)A \in \RFA
\end{eqnarray*}
for $B=(r + pA)\in \RFA$ and $C = (s+qA) \in \RFA$.

In the same way that $\CC$ is related to $\RR^2$, the 
complex space $\CFA$ can be uniquely associated with $\RFA^2$ by the mapping $\psi$:
\[
\psi(p+ir,\,q+is)=(p+ir) +(q+is)A=\underbrace{(p+qA)}_{\in \RFA}+i\underbrace{(r+sA)}_{\in \RFA}\,.
\] 
This implies that the mapping $(B,C) \mapsto B + iC$ is a bijective operator from $\RFA^2$ to $\CFA$, which allows us to rewrite the arithmetic operations on $\CFA$ in terms of the operations on $\RFA$. 
Given $Z = B+iD \in \CFA$ and $W=C + iE \in \CFA$, with $B,C,D,E\in \RFA$, we have: 
\begin{itemize}
    \item $Z\oplus W = (B\oplus C) + i(D\oplus E)$;
    \item $Z\oplus W = (B\odot C \ominus D\odot E) + i(B\odot E \oplus C\odot D)$.    
\end{itemize}

In view of Theorem~\ref{cor:banach}, any norm on $\CC^2$ induces a norm on $\CFA$ through the bijection $\psi.$ Recall that for any non singular matrix $M \in \CC^{4\times 4}$ and the p-norm $\|\cdot\|_p$, the function $\|M\mathbf{w}\|_p$, for all $\mathbf{w}\in \CC^2$, is a norm on $\CC^2.$ Thus, from Theorem~\ref{cor:banach}, the mapping $\|Z\|_{\tilde{p}} = \|M\psi^{-1}(Z)\|_p$, for all $Z\in \CFA$, is a norm on $\CFA.$ If $M$ is the identity matrix, then we simply denote the norm  
$\|\cdot\|_{\tilde{p}}$ on $\CFA$ by $\|\cdot\|_{p}.$ 

Considering the connections between the spaces $\RFA$ and $\CFA$ and the norm on $\RFA$ provided in \cite{frederico2025fuzzy}, in this paper, we focus on the norm $\|\cdot\|_{\tilde{p}}$ with the matrix $M$ given by 
\begin{equation}\label{eq:M}
M = \begin{bmatrix}
1 & a \\ 0 & 1
\end{bmatrix}
\end{equation}
where $a = m(A).$ So, for every $B = u + vA \in \CFA$, we consider the following norm
\begin{eqnarray}\label{eq:pnormCFA}
\nonumber \|B\|_{\tilde{p}} & = & \left\|M\psi^{-1}(B) \right\|_{p} \\
\nonumber  & = & \left\|\begin{bmatrix}
1 & a \\ 0 & 1
\end{bmatrix}
\begin{bmatrix}
u \\ v
\end{bmatrix}
\right\|_{p} \\
 & = & \left\lbrace \begin{array}{cl}
 \left( \lvert u + av\rvert^p + \lvert v\rvert^p \right)^{\frac{1}{p}} &, \mbox{ if } p \in [1,\infty)  \\
 \max\{\lvert u + av\rvert,\lvert v\rvert\}     & , \mbox{ if } p = \infty
 \end{array}  \right.\,.
\end{eqnarray}

A well-known result is that any two norms on a finite-dimensional Banach spaces are equivalent. Therefore, any two norms on $\CFA$ are equivalent. The following corollary states a useful equivalence between two specific norms on $\CFA$ that we will use throughout this text. 

\begin{corollary}\label{cor:equivalent_norms}
The norm $\|\cdot \|_{\tilde{p}}$, given in Equation~\eqref{eq:pnormCFA}, and the norm $\|\cdot\|_\infty$, given by $\| u + vA\|_\infty = \max\{\lvert u \rvert, \lvert v \rvert\}$ for all $u,v \in \CC$, are equivalent norms on $\CFA.$    
\end{corollary}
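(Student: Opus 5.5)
The plan is to get explicit constants, although equivalence already follows abstractly. By Theorem~\ref{cor:banach}, $\CFA$ is a two-dimensional complex vector space isomorphic to $\CC^2$ via $\psi$, so any two norms on it are equivalent. I would nonetheless first check that both maps are genuinely norms. For $\|\cdot\|_{\tilde{p}}$ this holds because $M$ in \eqref{eq:M} is invertible ($\det M = 1$), so $\mathbf{w}\mapsto\|M\mathbf{w}\|_p$ is a norm on $\CC^2$, transported by $\psi$. For $\|\cdot\|_\infty$ it is the $\infty$-norm of $\psi^{-1}(Z)$, with $M$ the identity. Then I would produce explicit constants $c_1,c_2>0$ with $c_1\|Z\|_\infty \le \|Z\|_{\tilde{p}} \le c_2\|Z\|_\infty$ for all $Z=u+vA\in\CFA$.

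\emph{Upper bound.} From \eqref{eq:pnormCFA} and the triangle inequality, $|u+av|\le |u|+|a||v|\le (1+|a|)\max\{|u|,|v|\}$. Since $|v|\le\max\{|u|,|v|\}$ as well, for $p\in[1,\infty)$ we get
\[
\|Z\|_{\tilde{p}} \le \big((1+|a|)^p+1\big)^{1/p}\,\|Z\|_\infty ,
\]
and for $p=\infty$ the constant is $1+|a|$.

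\emph{Lower bound.} Here I invert $M$. We have $M^{-1}=\begin{bmatrix}1&-a\\0&1\end{bmatrix}$, so $u=(u+av)-av$. This gives $|u|\le |u+av|+|a||v|\le(1+|a|)\max\{|u+av|,|v|\}$, and trivially $|v|\le\max\{|u+av|,|v|\}$. Hence $\|Z\|_\infty\le(1+|a|)\max\{|u+av|,|v|\}$. Using $\max\{x,y\}\le (x^p+y^p)^{1/p}$, this yields $\|Z\|_\infty\le (1+|a|)\|Z\|_{\tilde{p}}$ for every $p\in[1,\infty]$, so we may take $c_1=(1+|a|)^{-1}$.

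There is no real obstacle. The only care needed is handling $p=\infty$ separately from finite $p$, and noting that $a=m(A)$ is a fixed real number determined by $A$. The constants therefore depend only on $A$ and $p$, not on $Z$, which is what allows the corollary to be used uniformly later.
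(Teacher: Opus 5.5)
Your proof is correct, and its opening sentence is the paper's entire argument. The paper gives no separate proof of this corollary. It relies only on the remark just before it that all norms on a finite-dimensional space are equivalent, applied to $\CFA\cong\CC^2$ via $\psi$ (Theorem~\ref{cor:banach}).

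What you add is a genuinely different, elementary route. It bypasses the compactness argument behind the abstract fact and gives explicit constants $c_1=(1+|a|)^{-1}$ and $c_2=\big((1+|a|)^p+1\big)^{1/p}$ (or $1+|a|$ for $p=\infty$), depending only on $a=m(A)$ and $p$. These constants are essentially optimal:
\begin{itemize}
\item $c_2$ is attained at $u=1$, $v=\pm1$, with the sign chosen to match that of $a$;
\item $Z=-a+A$ has $\|Z\|_{\tilde{p}}=1$ but $\|Z\|_\infty=\max\{|a|,1\}$, so $c_1$ is off by at most a factor $2$.
\end{itemize}

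The abstract route is shorter. Yours shows that the equivalence constants really do degrade as $|a|$ grows. That is the quantitative information the paper later uses when it works with $\|M\|_p$ and $\|M^{-1}\|_p$ in Theorem~\ref{thm:charaterize_Lp}. It also exposes an error later in the paper: the claim $\|M\|_2=\|M^{-1}\|_2=1$ in the proof of item~9 of Proposition~\ref{prop:Prop_Fourier} fails for $a\neq 0$, since $M$ maps $(0,1)$ to a vector of Euclidean norm $\sqrt{1+a^2}$.
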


Another interesting result that will be useful for the proposal of this work is the inequality stated in the following proposition. 

\begin{proposition}\label{prop:inequality_norm_1}
For $B,C \in \CFA$, we have
\begin{equation}\label{eq:desigualdade_norma_produto}
 \|B\odot C \|_{\tilde{1}} \leq  \|B\|_{\tilde{1}} \|C\|_{\tilde{1}} \,. 
\end{equation}
\end{proposition}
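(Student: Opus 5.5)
Write $B = u + vA$ and $C = z + wA$ with $u,v,z,w\in\CC$, and set $a = m(A)$. By Equation~\eqref{eq:A_product},
\[
B\odot C = (uz - a^2vw) + (uw + zv + 2avw)A .
\]
The plan is to rewrite this in the coordinates used by the norm $\|\cdot\|_{\tilde 1}$, namely $\mathbf{m}(B) := u + av$ and $v$. These two quantities are exactly the entries of $M\psi^{-1}(B)$ with $M$ as in Equation~\eqref{eq:M}. Then $\|B\|_{\tilde1} = |u+av| + |v|$.

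For the first coordinate of $B\odot C$, compute
\[
uz - a^2vw + a(uw+zv+2avw) = uz + auw + azv + a^2vw = (u+av)(z+aw).
\]
This is just the statement that $m(B\odot C) = m(B)m(C)$. For the second coordinate, rewrite
\[
uw + zv + 2avw = (u+av)w + (z+aw)v .
\]

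Putting these together, with $\beta = u+av$ and $\gamma = z+aw$, the key identity is
\[
\|B\odot C\|_{\tilde1} = |\beta\gamma| + |\beta w + \gamma v| .
\]
The triangle inequality then gives
\[
\|B\odot C\|_{\tilde1} \le |\beta||\gamma| + |\beta||w| + |\gamma||v| .
\]
This bound is at most
\[
(|\beta|+|v|)(|\gamma|+|w|) = |\beta||\gamma| + |\beta||w| + |\gamma||v| + |v||w|,
\]
since the extra term $|v||w|$ is nonnegative. The right-hand side equals $\|B\|_{\tilde1}\|C\|_{\tilde1}$, which completes the argument.

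The only real step is the change of coordinates to $(u+av, v)$. The product is not submultiplicative in the naive coordinates $(u,v)$, because of the $a^2vw$ and $2avw$ terms. In the shifted coordinates, however, the cross product takes the form of dual-number multiplication $(\beta, v)(\gamma, w) = (\beta\gamma, \beta w + \gamma v)$, and the $\ell^1$ norm of such a product is submultiplicative. I expect nothing else to be an obstacle beyond checking the algebra of the two coordinate identities.
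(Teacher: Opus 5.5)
Your proof is correct. The identities $(uz-a^2vw)+a(uw+zv+2avw)=(u+av)(z+aw)$ and $uw+zv+2avw=(u+av)w+(z+aw)v$ both hold, and the triangle inequality then gives the bound, with the nonnegative term $|v||w|$ to spare. This is essentially the paper's route: the paper only cites the direct coordinate computation for $\RFA$ in the reference, and your argument carries that computation over to complex coefficients.
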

\begin{proof}
The proof follows the same steps as the proof of Proposition 2(i) presented in \cite{frederico2025fuzzyII} for the space $\RFA.$
\end{proof}

The following section introduces the concept of $A$-linearly interactive complex fuzzy processes, as well as the associated differential and integral calculus developed for them.

\section{Differential and Integral Calculus on $\CFA$}

Let $\Omega$ be a nonempty subset of the Euclidean space $\RR^n$, $n\geq1$. A function $F \colon \Omega \longrightarrow \CFA$ is said to be 
an $A$-linearly interactive fuzzy complex process when $\CFA$ is regarded as the Banach space $(\CFA,\oplus, \cdot, \|\cdot\|_{\tilde{p}}).$  
We denote the set of all $A$-linearly interactive fuzzy complex processes defined on $\Omega$ by $\FP(\Omega,\CFA)$. 
Since $\RR^n$ and $\CFA$ are Banach spaces, the notions of bounded, convergence, limit, continuity,  etc. are defined in a natural way for $A$-linearly interactive fuzzy complex processes. In particular, the notions of derivative \cite{Esmi,Shen} and partial derivative \cite{de2021differential,Mina} were first introduced for the space $\RFA$ and then extended to $\CFA$ in \cite{Salgado}.

\begin{definition}
Let $\|\cdot\|$ be a norm on $\RR^n$ and $F \in \FP(\Omega,\CFA)$, with $\Omega \subseteq \RR^n$, $n\geq 1$. 
\begin{itemize}

\item[(a)] $F$ is bounded if there exists $L > 0$ such that $\|F(\omega)\|_{\tilde{p}} \leq L$ for all $\omega \in \Omega;$ 

\item[(b)] $F$ is continuous at $\omega \in \Omega$ if for every $\epsilon > 0$ there exists $\delta > 0$ such that $\|F(\omega) - F(z)\|_{\tilde{p}} < \epsilon$ for all $z \in \Omega$ satisfying $\|\omega - z\| < \delta$;

\item[(c)] For $n=1$ and $\omega \in \mathrm{int}(\Omega)$, we say that $F$ is differentiable\footnote{Some authors refer to this notion of differentiability as $A$-differentiability or $\psi$-differentiability (see e.g. \cite{Salgado}). However, since we do not use any other notion of differentiability in this work, we choose not to include these prefixes for the sake of simplicity.}
at $\omega$ if there exists $F'(\omega) \in \CFA$, called derivative of $F$ at $\omega$, that satisfies  
\begin{equation}
\lim_{h\to 0} \left\|F'(\omega) \ominus  \frac{F(\omega+h) \ominus F(\omega)}{h} \right\|_{\tilde{p}} = 0.   
\end{equation}
The derivative of order $k\geq 0$, denoted by $F^{(k)}$ or $\frac{\partial^{k}}{\partial \omega^{k}}F$, is defined recursively with $F^{(0)} = F$. 

\item[(d)] For $n>1$, $\omega = (\omega_1,\ldots,\omega_n) \in \mathrm{int}(\Omega)$, and $i \in \{1,\ldots,n\}$, we say that $F$ is $i$th- partial differentiable at $\omega$ if there exists $\frac{\partial}{\partial \omega_i}F(\omega) \in \CFA$, called $i$th-partial derivative of $F$ at $\omega$, that satisfies  
\begin{equation}
\lim_{h\to 0} \left\|\frac{\partial}{\partial \omega_i}F(\omega) \ominus  \frac{F(\omega+he_i) \ominus F(\omega)}{h} \right\|_{\tilde{p}} = 0.   
\end{equation}
where $e_i$ is $i$th vector of the canonical basis of $\RR^n$. 
The derivative of order $k\geq 0$, denoted by $\frac{\partial^{k}}{\partial \omega_i^{k}}F$, is defined recursively with $\frac{\partial^{0}}{\partial \omega_i^{0}}F = F$. 

\item[(e)] Given a multi-index $\alpha = (\alpha_1,...,\alpha_n)$, that is, $\alpha_i \in \mathbb{N}_0 := \mathbb{N} \cup \{0\}$ for all $i=1,\ldots,n$ and $\lvert \alpha\rvert = \sum_{i=1}^n \alpha_i \leq k$. Consider the differential operator $D^\alpha$ defined by 
\begin{equation}\label{eq:OperatorD_alpha}
D^\alpha F = \frac{\partial^{\mid\alpha\mid}}{\partial \omega_1^{\alpha_1}\cdots\partial \omega_n^{\alpha_n}}F = 
\frac{\partial^{\alpha_1}}{\partial \omega_1^{\alpha_1}}\left(\frac{\partial^{\alpha_2}}{\partial \omega_2^{\alpha_2}}\left( \ldots \left(\frac{\partial^{\alpha_n}}{\partial \omega_n^{\alpha_n}} F(\omega) \right) \right) \right)\,,    
\end{equation}
\end{itemize}
provided the partial derivatives exist. 
Based on this definition, we consider the following subsets of $\FP(\Omega,\CFA)$:
\begin{itemize}

\item[$\bullet$] $C^k(\Omega,\CFA)$ denotes the set of all $F \in \FP(\Omega,\CFA)$ that are $k$ times continuously (partially) differentiable, that is, 
$D^\alpha F(\omega)$ exists at every
$\omega \in \mathrm{int}(\Omega)$ and multi-index $\alpha$, $\lvert\alpha\rvert\leq k$, and is continuous. 

\item[$\bullet$] $C_c^k(\Omega,\CFA)$ denotes the subset of $C^k(\Omega,\CFA)$ composed of \\ $F \in C^k(\Omega,\CFA)$ such that $\operatorname{supp}(F) := \overline{\{t \in \Omega \mid F(t) \neq 0\}}$, called the support of $F$, is compact, where $\overline{Y}$ stands for the closure of $Y$ relative to $\Omega$.

\item[$\bullet$] $C_c^\infty(\Omega,\CFA)$ represents the set of infinitely differentiable functions with compact support. The elements of $C_c^\infty(\Omega,\CFA)$ are also called \emph{test functions}.

\end{itemize}

\end{definition}

From Theorem~\ref{cor:banach}, for every $F \in \FP(\Omega,\CFA)$ there exist unique functions $u, v \colon \Omega \longrightarrow \CC$ such that 
\[
F(\omega) = u(\omega) + v(\omega)A,\quad \forall \omega \in \Omega.
\]
In addition, it follows that $(u,v) = \psi^{-1}\circ F$.
This reveals a bijection between the set $\FP(\Omega,\CFA)$ and the set of pairs of complex-valued functions defined on $\Omega$, denoted here by $\mathcal{F}(\Omega,\CC^2)$. 
As an immediate consequence of this connection,  $\FP(\Omega,\CFA)$ forms a complex vector space, with the corresponding operations defined pointwise. For $F,G \in \FP(\Omega,\CFA)$ and $\lambda \in \CC$, 
the sum of $F$ and $G$ and the product $\lambda$ and $F$ are the $A$-linearly interactive fuzzy complex processes $F\oplus G$ and $\lambda F$ given, respectively, by 
\[
(F\oplus G)(\omega) = F(\omega) \oplus G(\omega) \quad \mbox{ and } \quad 
(\lambda F)(\omega) = \lambda F(\omega)
\]
for all $\omega \in \Omega.$

In the same way that we use the map $\psi$ to carry algebraic and topological structures from $\CC^2$ to $\CFA$, we can use the bijection above (which is also based on $\psi$) to define concepts and function spaces of interest composed of elements of $\FP(\Omega,\CFA).$   

\begin{definition}\label{def:integral_CFA}
Let $F \in \FP(\Omega,\CFA)$ be given by $F(\omega) = u(\omega) + v(\omega)A$ for all $\omega \in \Omega$, where $u,v:\Omega\to \CC$ and  $\Omega \subseteq \RR^n$, $n\geq 1$.  

\begin{itemize}
\item[(a)] We say that $F$ is (Riemann/Lesbesgue) integrable in $\Omega$ if $u$ and $v$ are (Riemann/Lesbesgue) integrable\footnote{Some authors refer to this notion of integral as $A$-integral or $\psi$-integral (see e.g. \cite{Salgado}). However, since we do not use any other notion of integral in this work, we choose not to include these prefixes for the sake of simplicity.}\footnote{Usually, the notion of Riemann integral for $A$-linearly interactive fuzzy (complex) processes defined in an interval $[a,b]$ is derived from the well-established definition of Riemann integral for mappings between Banach spaces \cite{santo2020calculus,Zeidler}. However, in the context of the spaces $\CFA$ or $\RFA$, it exists iff the Riemann integrals in $[a,b]$ of a pair of classical functions exist, which provides an alternative way to define integrability in these spaces. Here, we adopt this alternative formulation since it can be used to define a more general notion of Lebesgue integrability in a region $\Omega$ of $\RR^n$. This same approach was employed in \cite{son2021fractional} to define fractional operators and the Laplace transform in $\RFA$.} in $\Omega$. In this case, the (Riemann/Lesbesgue) integral of $F$ in $\Omega$ is given by 
\begin{equation}
\int_\Omega F(\omega) d\omega  = \left(\int_\Omega u(\omega) d\omega\right) + \left(\int_\Omega v(\omega) d\omega\right)A\,.   
\end{equation}

\item[(b)] The conjugate of $F$ is $F^* \in \FP(\Omega,\CFA)$ given by 
\begin{equation}
 F^*(\omega) = u(\omega)^* + v(\omega)^*A
\end{equation}
where $u(\omega)^*$ and $v(\omega)^*$ are the complex conjugates of the complex numbers $u(\omega)$ and $v(\omega)$, respectively.
\end{itemize}

\end{definition}

\begin{remark}
Note that item (a) of Definition~\ref{def:integral_CFA}  also encompasses the notion of improper integrals since $\Omega$ may be unbounded. For example, if $F:[a,\infty)\to \CFA$ is integrable, then 
\begin{equation}
\int_{a}^{\infty}{F(\omega) d\omega}  :=\lim_{b\to\infty}{\int_{a}^{b}{F(\omega) d\omega}} = \left(\lim_{b\to\infty}{\int_{a}^{b}{u(\omega) d\omega}}\right) +  \left(\lim_{b\to\infty}{\int_{a}^{b}{v(\omega) d\omega}}\right)A.
\end{equation}
where $F(\omega) = u(\omega) + u(\omega)A$ for all $\omega.$
\end{remark}

Every $F \in \FP(\Omega,\CFA)$ can be decomposed in order to highlight its structure in terms of its real and imaginary components. 
If $u,v:\Omega\to \CC$ is given by $(u,v) = \psi^{-1}\circ F$, then 
$F(\omega) = u(\omega)+v(\omega)A$ for all $\omega \in \Omega.$ Since the codomain of $u$ and $v$ is the set of complex numbers, there exist functions $r,p,s,q:\Omega \to \RR$ such that 
\[
u(\omega)=r(\omega) + i p(\omega)\;\mbox{ and } \; v(\omega)=s(\omega) + i q(\omega)\,,\forall \omega \in \Omega.
\]
Thus, we can express $F$ as follows:
\begin{eqnarray*}
F(\omega) & = & u(\omega) + u(\omega)A \\ 
& = & (r(\omega) + i p(\omega)) + (s(\omega) + i q(\omega)A \\ 
& = & \underbrace{(r(\omega) + s(\omega)A)}_{:= F_1(\omega)} \;\oplus \;i\underbrace{(p(\omega) + q(\omega)A)}_{:= F_2(\omega)} \\
& = & \underbrace{F_1(\omega)}_{\in \RFA} \; \oplus \; i\underbrace{F_2(\omega)}_{\in \RFA} 
\end{eqnarray*}
Thus, setting $F_1(\omega) =r(\omega)+ s(\omega)A \in \RFA$ and $F_2(\omega) = p(\omega)+q(\omega)A \in\RFA$, we obtain
\begin{equation}\label{functionF}
F(\omega) = F_1(\omega) \; \oplus \; iF_2(\omega)\,,\quad \forall \omega \in \Omega.
\end{equation}
This decomposition clarifies the real and imaginary contributions of $F(t)$ and matches the structural representation of elements of $\FP(\Omega,\CFA)$ as pairs of $\RFA$-valued functions. Consequently, theoretical and computational results developed for $\RFA$ can be directly adapted to $\CFA$.

In the literature, the results of the following proposition is stated either with respect to the norm $\|\cdot\|_\infty$ on $\CFA$ or within the setting of $\RFA$. Ne\-vertheless, due to the equivalence of norms (see Corollary~\ref{cor:equivalent_norms}) and the structural relation between $\CFA$ and $\RFA^2$, the result remains valid when the norm $\|\cdot\|_{\tilde{p}}$ is used.

\begin{proposition}\label{prop:Basic_calculus_property}
Let $F \in \FP(\Omega,\CFA)$ be given by $F(\omega) = u(\omega) + v(\omega)A$ for all $\omega \in \Omega \subseteq \RR^n$, $n\geq 1.$ 
\begin{itemize}

\item[(a)] $F$ is bounded if, and only if, $u,v:\Omega \to \CC$ are bounded; 

\item[(b)] $F$ is continuous if, and only if, $u,v:\Omega \to \CC$ are continuous; 

\item[(c)] $F$ is $k$ times (partially) differentiable at $\omega \in \mathrm{int}(\Omega)$ if, and only if, $u,v: \Omega \to \CC$ are $k$ times (partially) differentiable at $\omega \in \mathrm{int}(\Omega)$. Moreover, 
\begin{equation}
    \frac{\partial^k}{\partial \omega_i^k}F(\omega) = \frac{\partial^k}{\partial \omega_i^k}u(\omega) + \frac{\partial^k}{\partial \omega_i^k}v(\omega)A\,,
\end{equation}
for $i\in \{1,\ldots,n\}.$ In particular, for $n=1$, it reduces to 
\begin{equation}
F^{(k)}(\omega) = u^{(k)}(\omega) + v^{(k)}(\omega)A. 
\end{equation}

\item[(d)] If $F$ is $i$th partially differentiable at $\omega$, then $F$ is continuous at $\omega$ with respect to the $i$th argument. For $n=1$, it reduces to the statement: if $\exists F'(\omega)$, then $F$ is continuous at $\omega;$

\item[(e)] Let $G \in \FP(\Omega,\CFA)$ and $\lambda \in \CC$. If $F$ and $G$ are $i$th-paritally differentiable at $\omega$, then 
\[
\frac{\partial}{\partial \omega_i}(F \oplus \lambda G)(\omega) = \frac{\partial}{\partial \omega_i}F(\omega) \;\oplus\; \lambda\frac{\partial}{\partial \omega_i}G(\omega)\,.
\]
For $n=1$, it reduces to 
\begin{equation*}
(F\oplus \lambda G)(\omega) = F'(\omega) \oplus \lambda G'(\omega). 
\end{equation*}

\item[(f)] Fuzzy Clairaut-Schwarz theorem: If $F \in C^k(\Omega,\CFA)$ and $\omega \in \mathrm{int}(\Omega)$, then for any multi-index $\alpha$, $\lvert \alpha \rvert \leq k$, and for all permutation $j:\{1,\ldots,n\}\to \{1,\ldots,n\}$, we have 
\[
D^\alpha F(\omega) = \frac{\partial^{\alpha_{j(1)}}}{\partial \omega_{j(1)}^{\alpha_{j(1)}}} \ldots \frac{\partial^{\alpha_{j(n)}}}{\partial \omega_{j(n)}^{\alpha_{j(n)}}} F(\omega) \,.
\]

\item[(g)] Let $G \in \FP(\Omega,\CFA)$ and $\lambda \in \CFA$. If $F$ and $G$ are integrable in $\Omega$, then 
\[
\int_\Omega (F \oplus \lambda \odot G)(\omega) d\omega = 
\int_\Omega F(\omega) d\omega \; \oplus \; \lambda \odot\int_\Omega G(\omega) d\omega\,.  
\]

\item[(h)] If $F$ is continuous and $\Omega$ a compact set of $\RR^n$, then 
$F$ is integrable in $\Omega$.

\item[(i)] Fuzzy Fubini–Tonelli theorem: If $\Omega = [a_1,b_1]\times\ldots\times[a_n,b_n]$ and $F$ is integrable in $\Omega$, then for any permutation $j:\{1,\ldots,n\}\to \{1,\ldots,n\}$ we have 
\[
\int_{a_1}^{b_1}\ldots \int_{a_n}^{b_n} F(\omega) \,d\omega_{1}\ldots,d\omega_{n} = \int_{a_{j(1)}}^{b_{j(1)}}\ldots \int_{a_{j(n)}}^{b_{j(n)}} F(\omega) \,d\omega_{j(1)}\ldots,d\omega_{j(n)}\,.
\]

\item[(j)] If $\Omega = [a,b]$ and $F$ is integrable on $\Omega$, then the $A$-linearly interactive complex fuzzy process $G$ defined by 
\begin{equation}
    G(t)= \int_{a}^{t}F(s)ds
\end{equation}
is differentiable at every $t \in (a,b)$ and 
\begin{equation}
    G'(t)=F(t).
\end{equation}  

\item[(k)] If $\Omega = [a,b]$ and $F$ is  integrable on $\Omega$, then the $A$-linearly interactive complex fuzzy process $G$ defined by 
\begin{equation}
    G(t)= \int_{a}^{t}F(s)ds
\end{equation}
is differentiable at every $t \in (a,b)$ with $G'(t)=F(t).$

\item[(l)] If $\Omega = [a,b]$ and $F$ is continuously differentiable in $\Omega$, then 
\begin{equation}
\int_{a}^{b}F'(s)ds = F(b) \ominus F(a). 
\end{equation}

\item[(m)] If $F$ is integrable in $\Omega$, then 
\begin{equation}
\left\|\int_{\Omega}F(s)ds\right\|_{\tilde{1}} \leq  \int_{\Omega}\left\|F(s)\right\|_{\tilde{1}}ds. 
\end{equation}

\item[(m)] Let $G:[a,b] \times \Omega \to \CFA.$ If $G(t,\cdot):\Omega \to \CFA$ is integrable in $\Omega$ for all $t \in [a,b]$ and $\frac{\partial}{\partial t} G$ exists 
\begin{equation}
\left\|\int_{\Omega}F(s)ds\right\|_{\tilde{1}} \leq  \int_{\Omega}\left\|F(s)\right\|_{\tilde{1}}ds. 
\end{equation}

\end{itemize}
\end{proposition}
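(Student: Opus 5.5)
The plan is to reduce every item of Proposition~\ref{prop:Basic_calculus_property} to the corresponding classical statement for the pair $(u,v)=\psi^{-1}\circ F$ of complex-valued functions. The tool for this is the linear isomorphism $\psi$ of Theorem~\ref{cor:banach} together with the norm equivalence of Corollary~\ref{cor:equivalent_norms}.

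\textbf{Step 1: reduction to the $\infty$-norm.} By Corollary~\ref{cor:equivalent_norms}, every topological notion in the statement (boundedness, continuity, existence of limits of difference quotients) is unchanged if $\|\cdot\|_{\tilde p}$ is replaced by $\|u+vA\|_\infty=\max\{|u|,|v|\}$. In that norm,
\[
\Big\|\big(z+wA\big)\ominus \tfrac{F(\omega+he_i)\ominus F(\omega)}{h}\Big\|_\infty
=\max\Big\{\Big|z-\tfrac{u(\omega+he_i)-u(\omega)}{h}\Big|,\Big|w-\tfrac{v(\omega+he_i)-v(\omega)}{h}\Big|\Big\}.
\]
This uses the coordinatewise formulas for $\oplus$, $\ominus$ and complex scalar multiplication from \eqref{eq:sum_cfa_b}.

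\textbf{Step 2: the coordinatewise items.} Step 1 gives (a), (b) and (c) immediately. For (c), the candidate derivative is $\partial_i u+\partial_i v\,A$, and induction on $k$ handles higher order derivatives. Items (d), (e) and (f) then follow from the classical results for $u$ and $v$ transported back through (c); for (f), one applies classical Clairaut--Schwarz to the $C^k$ functions $u$ and $v$. Items (h), (i), (j), (k) and (l) follow directly from Definition~\ref{def:integral_CFA}, since the integral is defined coordinatewise, combined with the classical theorems applied to $u$ and $v$. For (l), note that $F'=u'+v'A$ by (c), so $\int_a^b F'=(u(b)-u(a))+(v(b)-v(a))A=F(b)\ominus F(a)$.

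\textbf{Step 3: item (g).} Write $\lambda=z+wA$ and $G=g+hA$. Formula \eqref{eq:A_product} gives
\[
\lambda\odot G=(zg-a^2wh)+(zh+wg+2awh)A,
\]
whose coefficients are $\CC$-linear in $(g,h)$ with constant coefficients. Linearity of the classical integral then gives the identity; this amounts to Proposition~2 items 2 and 3 applied inside the integral.

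\textbf{Step 4: item (m), the main obstacle.} The integral inequality is the only part that is not purely coordinatewise. Here
\[
\|u+vA\|_{\tilde1}=|u+av|+|v|,
\]
so
\[
\Big\|\int_\Omega F\Big\|_{\tilde1}=\Big|\int_\Omega (u+av)\Big|+\Big|\int_\Omega v\Big|\le\int_\Omega|u+av|+\int_\Omega|v|=\int_\Omega\|F(s)\|_{\tilde1}\,ds.
\]
This uses the classical triangle inequality for integrals of the complex functions $u+av$ and $v$, which are integrable because $a=m(A)\in\RR$. I expect the remaining care to lie in justifying integrability of $s\mapsto\|F(s)\|_{\tilde1}$, which follows from measurability of $u$ and $v$.

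The second item labelled (m) is incompletely stated. I would read it as differentiation under the integral sign, $\frac{d}{dt}\int_\Omega G(t,\omega)\,d\omega=\int_\Omega \partial_t G(t,\omega)\,d\omega$, under a dominated-derivative hypothesis on $\partial_t G$. It would then follow from the classical Leibniz rule applied to both coefficient functions, together with (c).
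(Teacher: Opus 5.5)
Your route matches the paper's. The paper also gets (a)--(b) from the norm equivalence in Corollary~\ref{cor:equivalent_norms}. It takes (c) from earlier work; that argument is your coordinatewise difference-quotient one. It derives (d)--(f) and (h)--(m) from (c), Definition~\ref{def:integral_CFA} and the classical complex-valued results. It proves (g) by exactly your computation with \eqref{eq:A_product} plus linearity of the integral. Your explicit treatment of the first (m) is more careful than the paper's one-line appeal: you write $\|u+vA\|_{\tilde 1}=|u+av|+|v|$ and apply the triangle inequality separately to $u+av$ and $v$. Integrability of $|u+av|$ and $|v|$ follows directly from integrability of $u+av$ and $v$; the fact that $a\in\RR$ plays no role.

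One step does fail, and the flaw comes from the statement itself: items (j) and (k). No classical theorem says that for an arbitrary integrable $f$ on $[a,b]$, the function $t\mapsto\int_a^t f(s)\,ds$ is differentiable at every $t\in(a,b)$ with derivative $f(t)$. The fundamental theorem of calculus gives this only at points where $f$ is continuous, and for Lebesgue integrable $f$ only almost everywhere.

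For a counterexample, take $c\in(a,b)$ and $F=\chi_{[c,b]}+0A$, which is integrable on $[a,b]$. Then $G(t)=\max\{t-c,0\}+0A$, and by (c) $G$ is not differentiable at $t=c$. So ``apply the classical theorem to $u$ and $v$'' cannot prove (j)/(k) as written. The paper's proof passes over the same point.

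There are two ways to repair it. You can add the hypothesis that $F$ is continuous (equivalently, by (b), that $u$ and $v$ are). Alternatively, you can weaken the conclusion to points of continuity of $F$, or to almost every $t$. With either change, your coordinatewise argument goes through unchanged.
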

\begin{proof}
Items (a) and (b) follow immediately from Corollary~\ref{cor:equivalent_norms}. Item (c) was proved in \cite{Salgado}. Items (d)–(f) are consequences of item (c), together with the corresponding results from the calculus of complex-valued functions. Items (h)–(m) follow from Definition~\ref{def:integral_CFA}, item (c), and the corresponding results from the calculus of complex-valued functions.
It remains to prove item (g). To this end, consider $a=m(A)$, 
$F=u+vA$, $G=z+wA$, and $\lambda = \lambda_1 + \lambda_2A$, with $u,v,z,w:\Omega\to \CC$ and $\lambda_1,\lambda_2 \in \CC$. 
From Equation~\eqref{eq:A_product} and Definition~\ref{def:integral_CFA}, we have 
\begin{align*}
& \int_\Omega (F \oplus \lambda \odot G)(\omega) d\omega = \\
& \; = \int_\Omega (u(\omega)+v(\omega)A) \oplus (\lambda_1 + \lambda_2A) \odot (z(\omega) + w(\omega)A) d\omega \\
& \; = \int_\Omega (u(\omega)+ \lambda_1 z(\omega) -a^2\lambda_2 w(\omega)) \,d\omega + \\
& \quad + A\int_\Omega (v(\omega)+ \lambda_1 w(\omega) +\lambda_2 z(\omega) +2a\lambda_2 w(\omega)) \,d\omega \\
& \; = \int_\Omega u(\omega) d\omega + \lambda_1 \int_\Omega z(\omega)d\omega -a^2\lambda_2  \int_\Omega w(\omega) d\omega + \\
& \quad + A\left(\int_\Omega v(\omega)d\omega + \lambda_1 \int_\Omega  w(\omega)d\omega +\lambda_2 \int_\Omega z(\omega)d\omega +2a\lambda_2 \int_\Omega w(\omega))d\omega\right) \\
& \; = {\small \left(\int_\Omega u(\omega) d\omega + A \int_\Omega v(\omega) d\omega\right) \oplus   (\lambda_1 + \lambda_2A)\odot\left(\int_\Omega z(\omega)d\omega + A\int_\Omega w(\omega)d\omega\right)}
\\ &
= \int_\Omega F(\omega) d\omega \; \oplus \; \lambda \odot\int_\Omega G(\omega) d\omega\,.      
\end{align*}
\end{proof}

One can relate the concept of integrability in $\CFA$ to that in $\RFA$ \cite{FranIntegral,Shen} by considering the real and imaginary decomposition given in Equation~\eqref{functionF}.
Let $F(\omega)  =  u(\omega) + u(\omega)A$, with 
$u(\omega)=r(\omega) + i p(\omega)$ and $v(\omega)=s(\omega) + i q(\omega)$, for all $\omega \in \Omega.$ We can rewrite $F$ as $F(\omega) = F_1(\omega) \; \oplus \; iF_2(\omega)$, where $F_1(\omega) =r(\omega)+ s(\omega)A \in \RFA$ and $F_2(\omega) = p(\omega)+q(\omega)A \in\RFA$ for all $\Omega$. 
If $F$ is partially differentiable at some $\omega \in (a,b)$, then, from item (c) of Proposition~\ref{prop:Basic_calculus_property}, we obtain that the functions $r,s,p,q:\Omega \to \RR$ are partially differentiable at $\omega$. This implies that $A$-linearly interactive fuzzy processes $F_1$ and $F_2$ are differentiable at $\omega$ \cite{Esmi,Mina,Shen}: 
\begin{equation}\label{FlinhaporPsi1}
\frac{\partial}{\partial \omega_i}F(\omega) = \frac{\partial}{\partial \omega_i}F_1(\omega) + i \frac{\partial}{\partial \omega_i}F_2(\omega),
\end{equation}
where $\frac{\partial}{\partial \omega_i}F_1(\omega)$ and $\frac{\partial}{\partial \omega_i}F_2(\omega)$ belong to $\RFA$. 
Similarly, if $F$ is continuous, bounded, or  integrable in $\Omega = [a,b]$, then $F_1$ and $F_2$ are also continuous, bounded, or  integrable in $[a,b]$. In the last case, we have 
\[
\int_a^b F(\omega)d\omega = \int_a^b F_1(\omega)d\omega \;\oplus \; i\int_a^b F_2(\omega)d\omega\,.
\]

The next proposition establishes that the (Leibniz) product rule and integration-by-parts formulas can be derived for $A$-linearly interactive fuzzy complex processes with respect to the $A$-cross product.

\begin{proposition}\label{partes}
  Let $F,G \in \FP(\Omega,\CFA)$ with $\Omega \subset \RR^n$. The following properties hold true:
    \begin{enumerate}
        \item[a.] If $F$ and $G$ are continuous in $\Omega$, then $F\odot G$ is continuous in $\Omega$.
        \item[b.] If $F$ and $G$ are $i$th partially differentiable at $\omega \in \mathrm{int}(\Omega)$, then $F\odot G$ is $i$th partially differentiable at $\omega$, and 
        \begin{equation}\label{eq:regraderivadaproduto}
        \frac{\partial}{\partial \omega_i}(F(\omega)\odot G(\omega))=\left(\frac{\partial}{\partial \omega_i}F(\omega)\odot G(\omega)\right)\oplus \left(F(\omega)\odot \frac{\partial}{\partial \omega_i}G(\omega)\right).
        \end{equation}

        \item[c.] If $F$ and $G$ are integrable in $\Omega$, then $F\odot G$ is  integrable in $\Omega$.
        
        \item[d.] If $\Omega = [a,b]$ and $F, G \in C^1(\Omega,\CFA)$, then 
        \begin{equation}\label{eq:int_by_parts}
           \int_a^b{(F'(\omega)\odot G(\omega)) d\omega} = (F(\omega)\odot G(\omega))\Big\vert_a^b \ominus \int_a^b{(F(\omega)\odot G'(\omega)) d\omega}.
        \end{equation}

        \item[e.] If $\Omega$ is an interval of $\RR$, $F \in C^k(\Omega,\CFA)$ and $\Phi \in C_{c}^{\infty}(\Omega,\CFA)$, then  
        \begin{equation}\label{eq:parts_generalized}
        \int_{\Omega} F(\omega) \odot \Phi^{(k)}(\omega) \; d\omega = (-1)^{\mid \alpha \mid} \int_{\Omega} F^{(k)}(\omega) \odot \Phi(\omega) \; d\omega.
        \end{equation}

        \item[f.] If $\Omega = \RR^n$, $F \in C^k(\Omega,\CFA)$ and $\Phi \in C_{c}^{\infty}(\Omega,\CFA)$, then for all multi-index $\alpha$, $\lvert \alpha\rvert \leq k$, we have 
        \begin{equation}\label{eq:parts_generalized}
        \int_{\Omega} F(\omega) \odot D^{\alpha} \Phi(\omega) \; d\omega = (-1)^{\mid \alpha \mid} \int_{\Omega} D^{\alpha} F(\omega) \odot \Phi(\omega) \; d\omega.
        \end{equation}
        
    \end{enumerate}
\end{proposition}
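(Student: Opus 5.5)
The approach is to reduce every item to classical calculus for complex-valued functions, using the coordinate description of the $A$-cross product in Equation~\eqref{eq:A_product}. Write $F=u+vA$ and $G=z+wA$ with $u,v,z,w:\Omega\to\CC$, and set $a=m(A)\in\RR$. Then pointwise
\[
F\odot G=(uz-a^2vw)+(uw+zv+2avw)A .
\]
The two coefficient functions $\phi_1=uz-a^2vw$ and $\phi_2=uw+zv+2avw$ are polynomial (bilinear) expressions in $u,v,z,w$ with constant complex coefficients.

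\textbf{Item a.} By Proposition~\ref{prop:Basic_calculus_property}(b), $u,v,z,w$ are continuous. Hence $\phi_1,\phi_2$ are continuous, and applying the same item again shows $F\odot G$ is continuous.

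\textbf{Item b.} By Proposition~\ref{prop:Basic_calculus_property}(c), $u,v,z,w$ are $i$th partially differentiable at $\omega$. The classical product rule gives
\[
\partial_i\phi_1=(\partial_iu)z+u\,\partial_iz-a^2\bigl((\partial_iv)w+v\,\partial_iw\bigr),
\]
and similarly for $\phi_2$. Regroup the terms containing $\partial_iu,\partial_iv$ and those containing $\partial_iz,\partial_iw$. Comparing with Equation~\eqref{eq:A_product} applied to $(\partial_iF)\odot G$ and to $F\odot(\partial_iG)$, where $\partial_iF=\partial_iu+\partial_ivA$, identifies the two groups. Item (c) of Proposition~\ref{prop:Basic_calculus_property} then yields Equation~\eqref{eq:regraderivadaproduto}. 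Alternatively, one can use bilinearity: properties 2--3 of the $A$-cross product, together with commutativity, show $\odot$ is a bilinear map on the finite-dimensional space $\CFA$, and the product rule for bounded bilinear maps applies directly.

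\textbf{Item c.} This is the delicate point. Products of Lebesgue integrable functions need not be integrable, so I would read the hypothesis as holding in the setting where the statement is true. Concretely, either $F,G$ are Riemann integrable on a compact $\Omega$, where products of Riemann integrable (hence bounded) functions are Riemann integrable, or one of them is bounded, e.g.\ $\Phi$ a test function, as is the case in items e and f. Under that reading, $\phi_1,\phi_2$ are finite sums of products of integrable components and are therefore integrable. This is the step I expect to need an added qualifying hypothesis or an explicit remark.

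\textbf{Item d.} Integrate the identity from item b over $[a,b]$. Its left-hand side is continuous, by items a and b, so Proposition~\ref{prop:Basic_calculus_property}(l) gives $\int_a^b(F\odot G)'\,d\omega=(F\odot G)(b)\ominus(F\odot G)(a)$. Split the integral of the sum with Proposition~\ref{prop:Basic_calculus_property}(g), taking $\lambda=1$, and rearrange in the vector space $\CFA$.

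\textbf{Item e.} Choose $[a,b]\subset\Omega$ containing $\operatorname{supp}\Phi$ in its interior, so that all boundary terms vanish. Applying item d once gives $\int F\odot\Phi'=-\int F'\odot\Phi$. Induction on $k$, applying d to $F^{(j)}$ and $\Phi^{(k-j-1)}$, gives the factor $(-1)^k$; note the exponent $\lvert\alpha\rvert$ in the statement should read $k$ here.

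\textbf{Item f.} Apply the one-dimensional result coordinatewise. For $\alpha=e_i$, write the integral over $\RR^n$ as iterated integrals on a box containing $\operatorname{supp}\Phi$, which is allowed by Fubini, Proposition~\ref{prop:Basic_calculus_property}(i). Integrate by parts in the variable $\omega_i$ using item d, with boundary terms zero. For general $\alpha$, iterate over the factors of $D^\alpha$, using Clairaut--Schwarz (Proposition~\ref{prop:Basic_calculus_property}(f)) to reorder derivatives. Each step contributes one factor $-1$, giving $(-1)^{\lvert\alpha\rvert}$.
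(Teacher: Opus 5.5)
Your proposal is correct and follows the paper's proof essentially step for step. The coordinate formula for $F\odot G$ from Equation~\eqref{eq:A_product} reduces a and b to classical continuity and the product rule on the components; d is the fundamental theorem of calculus combined with the product rule and linearity of the integral; e is repeated integration by parts with vanishing boundary terms, and the exponent there should indeed be $k$; f is Fubini plus Clairaut--Schwarz applied coordinatewise.

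Your caveat on item c is a correct observation rather than a gap in your argument. The paper simply asserts that products of integrable complex functions are integrable, which fails for Lebesgue (or improper Riemann) integrability: $F=G=\omega^{-1/2}+0A$ on $(0,1)$ gives $F\odot G=\omega^{-1}$. So c genuinely needs the proper-Riemann or boundedness hypothesis you propose.
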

\begin{proof}
Let $F(\omega) = u(\omega) + v(\omega)A$ and $G(\omega) = z(\omega) + w(\omega)A$ for all $\omega \in \Omega$, with $u,v,z,w:\Omega\to \CC.$
By Equation~\eqref{eq:A_product}, we obtain 
{\small
\begin{eqnarray}\label{eq:aux_rules}
(F\odot G)(\omega) & = &  F(\omega) \odot G(\omega) \\ \nonumber
 & = &  (z(\omega)u(\omega) - a^2v(\omega)w(\omega)) + (u(\omega)w(\omega) + z(\omega)v(\omega) + 2av(\omega)w(\omega))A    
\end{eqnarray}}
for all $\omega \in \Omega$, where $a = m(A)$.

\begin{enumerate}
\item[a.] From item (b) of Proposition~\ref{prop:Basic_calculus_property}, the functions $u,v,z,w$ are continuous in $\Omega.$ This implies that the right-hand side of Equation~\eqref{eq:aux_rules} is also continuous since it is given by compositions of continuous functions. Again, from item (b) of Proposition~\ref{prop:Basic_calculus_property}, we conclude that $F\odot G$ is continuous in $\Omega.$

 \item[b.] The proof presented here follows the same steps as those provided in \cite{FranIntegral} for the space $\RFA$.  
 From item (c) of Proposition~\ref{prop:Basic_calculus_property}, the functions $u,v,z,w$ are $i$th partially differentiable at $\omega \in \mathrm{int}(\Omega)$. Using the classical product rule for complex differentiation in the right-hand side of Equation~\eqref{eq:aux_rules}, we obtain 
{\small
\begin{align*}
\frac{\partial}{\partial \omega_i}&(F\odot G)(\omega)  =   \\
= &  \frac{\partial}{\partial \omega_i}\left(z(\omega)u(\omega) - a^2v(\omega)w(\omega) + (u(\omega)w(\omega) + z(\omega)v(\omega) + 2av(\omega)w(\omega))A\right) \\
= &  \left(\frac{\partial}{\partial \omega_i}z(\omega)\right)u(\omega) + z(\omega)\frac{\partial}{\partial \omega_i}u(\omega) - a^2 \left(\frac{\partial}{\partial \omega_i}v(\omega)\right)w(\omega) -a^2v(\omega)\frac{\partial}{\partial \omega_i}w(\omega) \\
& +\left[\left(\frac{\partial}{\partial \omega_i}u(\omega)\right)w(\omega) + u(\omega)\frac{\partial}{\partial \omega_i}w(\omega) + \left(\frac{\partial}{\partial \omega_i}z(\omega)\right)v(\omega) \right. \\
& + \left. z(\omega)\frac{\partial}{\partial \omega_i}v(\omega) + 2a\left(\frac{\partial}{\partial \omega_i}v(\omega)\right)w(\omega) + 2av(\omega)\frac{\partial}{\partial \omega_i}w(\omega) \right]A \\
= & \left[ z(\omega)\frac{\partial}{\partial \omega_i}u(\omega) - a^2 \left(\frac{\partial}{\partial \omega_i}v(\omega)\right)w(\omega) + \left( \left(\frac{\partial}{\partial \omega_i}u(\omega)\right)w(\omega) +  z(\omega)\frac{\partial}{\partial \omega_i}v(\omega) \right. \right. \\
&  + \left.\left. 2a\left(\frac{\partial}{\partial \omega_i}v(\omega)\right)w(\omega)\right)A\right] \oplus \left[ \left(\frac{\partial}{\partial \omega_i}z(\omega)\right)u(\omega)  -a^2v(\omega)\frac{\partial}{\partial \omega_i}w(\omega) \right. \\
& \left.\left( u(\omega)\frac{\partial}{\partial \omega_i}w(\omega) + \left(\frac{\partial}{\partial \omega_i}z(\omega)\right)v(\omega) + 2av(\omega)\frac{\partial}{\partial \omega_i}w(\omega) \right)A\right] \\
= & \left(\frac{\partial}{\partial \omega_i}F(\omega)\odot G(\omega)\right)\oplus \left(F(\omega)\odot \frac{\partial}{\partial \omega_i}G(\omega)\right).
\end{align*}}

\item[c.] By Definition~\ref{def:integral_CFA}(a), the functions $u,v,z,w$ are integrable in $\Omega.$ Using the fact that the product of two complex-valued integrable functions is also integrable, we obtain that the right-hand side of Equation~\eqref{eq:aux_rules} is integrable in $\Omega$. Therefore, the integral of $F\odot G$ in $\Omega$ exists. 

\item[d.] Since fuzzy versions of the fundamental calculus theorem (see Proposition~\ref{prop:Basic_calculus_property}(i)) and of the Leibniz product rule (i.e., item b. above) are established for $A$-linearly interactive fuzzy complex processes, we can prove the integration-by-parts formula following the same steps of the classical settings. On the one hand, we have 
\[
\int_a^b (F\odot G)'(\omega) d\omega = 
F(b)\odot G(b) \ominus F(a)\odot G(a) =
F(\omega)\odot G(\omega)\Big\vert_a^b\,.
\]
On the other hand, from the linearity of the integral and the Leibniz product rule, we have 
\[
\int_a^b (F\odot G)'(\omega) d\omega = 
\int_a^b F'(\omega)\odot G(\omega)d\omega \;\oplus \; \int_a^b F(\omega)\odot G'(\omega) d\omega\,.
\]
Combining these both equalities, we obtain the 
integration-by-parts formula given by Equation~\eqref{eq:int_by_parts}.    

\item[e.] Since $\Phi \in C_{c}^{\infty}(\Omega,\CFA)$, there exists $ [a,b] \subset \Omega$ such that 
$\Phi(\omega) = 0$ for $\omega \leq a$ or $\omega \geq b$. This implies that $\Phi^{(j)}(\omega) = 0$ for all $\omega \leq a$ or $\omega \geq b$ and for all integer $j\leq 0$. Hence, $\Phi^{(j)} \in C_{c}^{\infty}(\Omega,\CFA)$ for every $j\geq 0$.  Using the integration-by-parts formula above, we obtain
\begin{eqnarray*}
\int_{a}^{b} F(\omega) \odot \Phi^{(k)}(\omega) \; d\omega & = &
\int_{a}^{b} F(\omega) \odot \frac{\partial}{\partial \omega} \left(\Phi^{(k-1)} (\omega)\right) \; d\omega \\ 
& = & \cancelto{0}{F(\omega)\odot \Phi^{(k-1)}(\omega)\Big\vert_a^b} \ominus \int_{a}^{b} F'(\omega) \odot \Phi^{(k-1)} (\omega) \; d\omega \\
& = & -1\left(\cancelto{0}{F'(\omega)\odot \Phi^{(k-2)}(\omega)\Big\vert_a^b} \ominus \int_{a}^{b} F''(\omega) \odot \Phi^{(k-2)} (\omega) \; d\omega  \right) \\
& \vdots & \\
& = & (-1)^k\int_{a}^{b} F^{(k)}(\omega) \odot \Phi(\omega) \; d\omega\,. 
\end{eqnarray*}

\item[f.] Since $\Phi \in C_{c}^{\infty}(\RR^n,\CFA)$, there exists $C= [a_1,b_1]\times\ldots\times[a_n,b_n] \subset \RR^n$ such that 
$\Phi(\omega) = 0$ for $\omega \in D = \RR^n\setminus C$. This implies that $D^\alpha \Phi(\omega) = 0$ for all $\omega \in D$ and multi-index $\alpha$. Hence, $D^\alpha \Phi \in C_{c}^{\infty}(\RR^n,\CFA)$ for every multi-index $\alpha$.  

Given $\alpha$ be a multi-index such that $\lvert \alpha \rvert \leq k$, from item e. above and the fuzzy Clairaut-Schwarz and Fubini–Tonelli theorems (see Proposition~\ref{prop:Basic_calculus_property}), we have
{\small
\begin{align*}
& \int_{\Omega} F(\omega) \odot D^{\alpha}(\omega) \Phi \; d\omega = \\ 
&  =
\int_{a_n}^{b^n}\ldots \left[\int_{a_1}^{b^1} F(\omega) \odot \frac{\partial^{\alpha_1}}{\partial \omega_1^{\alpha_1}} \left(\frac{\partial^{\alpha_2}}{\partial \omega_2^{\alpha_2}} \ldots \frac{\partial^{\alpha_n}}{\partial \omega_n^{\alpha_n}}  \Phi(\omega) \right) \; d\omega_1\right]\ldots d\omega_n \\
& =
(-1)^{\alpha_1}\int_{a_n}^{b^n}\ldots \int_{a_1}^{b^1}\left[\int_{a_2}^{b^2} \frac{\partial^{\alpha_1}}{\partial \omega_1^{\alpha_1}}F(\omega) \odot  \frac{\partial^{\alpha_2}}{\partial \omega_2^{\alpha_2}} \ldots \frac{\partial^{\alpha_n}}{\partial \omega_n^{\alpha_n}}  \Phi(\omega) \; d\omega_2\right]d\omega_1\ldots d\omega_n \\
&  =
(-1)^{\alpha_1+\alpha_2}\int_{a_n}^{b^n}\ldots \int_{a_1}^{b^1}\left[\int_{a_2}^{b^2} \frac{\partial^{\alpha_2}}{\partial \omega_2^{\alpha_2}}\frac{\partial^{\alpha_1}}{\partial \omega_1^{\alpha_1}}F(\omega) \odot  \frac{\partial^{\alpha_3}}{\partial \omega_3^{\alpha_3}} \ldots \frac{\partial^{\alpha_n}}{\partial \omega_n^{\alpha_n}}  \Phi(\omega) \; d\omega_2\right]d\omega_1\ldots d\omega_n \\
& \quad \vdots \\
&  =
(-1)^{\alpha_1+\ldots+\alpha_n}\int_{a_1}^{b^1}\ldots \int_{a_n}^{b^n} \frac{\partial^{\alpha_n}}{\partial \omega_n^{\alpha_n}} \ldots \frac{\partial^{\alpha_1}}{\partial \omega_1^{\alpha_1}}F(\omega) \odot \Phi(\omega) d\omega_n\ldots d\omega_1 \\
&  =
(-1)^{\lvert \alpha \rvert}\int_\Omega D^\alpha F(\omega) \odot \Phi(\omega) d\omega.
\end{align*}
}
            
\end{enumerate}
\end{proof}

\section{Sobolev Spaces of $A$-linearly interactive fuzzy complex process}\label{sec:sobolev}

In classical functional analysis, Sobolev spaces are vector spaces of functions that possess weak derivatives up to a certain order. These spaces play a central role in the study of partial differential equations, since solutions—when they exist—naturally lie in suitable Sobolev spaces.

In this section, we define these spaces for $A$-linearly interactive fuzzy complex processes. Throughout our discussion, we consider $\Omega \subseteq \mathbb{R}^n$ to be an open domain. The proposed spaces will inherit both the analytical structure of classical Sobolev spaces and the algebraic properties of fuzzy complex numbers developed in the preceding sections. 

A weak derivative is a generalization of the concept of the derivative of a (classic) function for functions not assumed differentiable, but only integrable. 

\begin{definition}[$\alpha^{th}$-weak partial derivative] 
Let $L^1_{\text{loc}}(\Omega,\CFA)$ be the set of all functions that are integrable in any compact set $K$ of  $\Omega$. 
For $F,G \in L^1_{\text{loc}}(\Omega,\CFA)$ and a multi-index $\alpha$, we say that $G$ is the $\alpha^{th}$-weak partial derivative of $F$ if, for all $\Phi \in C_{c}^{\infty}(\Omega,\CFA)$, it follows 
\begin{equation}\label{weak}
\int_{\Omega} F(\omega) \odot D^{\alpha} \Phi(\omega) \;d\omega = (-1)^{\mid \alpha \mid} \int_{\Omega} G(\omega) \odot \Phi(\omega) \;d\omega\,. 
\end{equation}
In this case, we write $G = D^{\alpha} F$\footnote{It is common to use the same symbol of the differential operator $D^\alpha$ given by Equation~\eqref{eq:OperatorD_alpha}, since every $F \in C^k(\RR^n,\CFA)$ satisfies Equation~\eqref{eq:parts_generalized}.}.
\end{definition}

Of course, if $F$ is differentiable in the usual sense, then the notion of usual and weak derivatives coincide.

\begin{proposition}\label{prop:weak_unique}
A weak $\alpha^{th}$-partial derivative of $F \in L^1_{\text{loc}}(\Omega,\CFA)$, if it exists, is uniquely determined almost everywhere in $\Omega$.
\end{proposition}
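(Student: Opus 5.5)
Suppose $G, \tilde G \in L^1_{\text{loc}}(\Omega,\CFA)$ are both $\alpha^{th}$-weak partial derivatives of $F$. The plan is to reduce uniqueness to the classical fundamental lemma of the calculus of variations for complex-valued functions, working through the coordinates $\psi^{-1}$.

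\textbf{Step 1: linearity.} Subtracting the two identities \eqref{weak} and using the distributivity of $\odot$ over $\oplus$ (Proposition 2.3) together with the linearity of the integral (Proposition~\ref{prop:Basic_calculus_property}(g)), we get
\[
\int_\Omega H(\omega)\odot\Phi(\omega)\,d\omega = 0
\]
for every $\Phi \in C_c^\infty(\Omega,\CFA)$, where $H = G\ominus\tilde G$. Write $H = h_1 + h_2 A$ with $h_1,h_2 \in L^1_{\text{loc}}(\Omega,\CC)$.

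\textbf{Step 2: choice of test functions.} Choose test functions of the special form $\Phi = \varphi + 0A$ with $\varphi \in C_c^\infty(\Omega,\CC)$. By Proposition~\ref{prop:Basic_calculus_property}(c), such $\Phi$ lie in $C_c^\infty(\Omega,\CFA)$. By \eqref{eq:A_product} with $z=\varphi$ and $w=0$,
\[
H\odot\Phi = h_1\varphi + (h_2\varphi)A.
\]
By Definition~\ref{def:integral_CFA} and the injectivity of $\psi$ (Theorem~\ref{cor:banach}, which needs $A$ asymmetric), the vanishing of the integral is equivalent to
\[
\int_\Omega h_1\varphi \,d\omega = 0 \quad\text{and}\quad \int_\Omega h_2\varphi\,d\omega = 0
\]
for all $\varphi\in C_c^\infty(\Omega,\CC)$.

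\textbf{Step 3: conclusion.} The classical du Bois-Reymond lemma, applied to the real and imaginary parts, gives $h_1 = h_2 = 0$ a.e. Hence $H = 0$ a.e., so $G = \tilde G$ almost everywhere.

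\textbf{Main obstacle.} The only delicate point is checking that $\Phi=\varphi+0A$ is admissible and that the $A$-cross product against it acts componentwise, so that the fuzzy identity genuinely splits into two scalar ones. This is exactly where the explicit formula \eqref{eq:A_product} and the isomorphism $\psi$ are used. The rest is routine.
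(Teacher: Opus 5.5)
Your proposal is correct and follows essentially the same route as the paper. Both arguments subtract the two weak-derivative identities, then test against $\Phi=\varphi+0A$ with $\varphi\in C_c^\infty(\Omega,\CC)$ so that \eqref{eq:A_product} splits the fuzzy identity into two scalar ones, and finally apply the fundamental lemma of the calculus of variations to each coefficient. You are slightly more explicit than the paper in two places: you justify the subtraction step via distributivity of $\odot$ and linearity of the integral, and you invoke injectivity of $\psi$ to equate coefficients, which the paper uses only implicitly.
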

\begin{proof}
Assume $G,\tilde{G} \in L^1_{\text{loc}}(\Omega,\mathbb{C}_{\mathcal{F}(A)})$ both satisfy Equation~\eqref{weak} for the $\alpha^{th}$-weak derivative of $F$. Then for all $\Phi \in C_{c}^{\infty}(\Omega,\CFA)$:
$$\int_{\Omega} (G \ominus \tilde{G})(\omega) \odot \Phi(\omega)\,d\omega = 0.$$
Let $G(\omega) = u(\omega) + v(\omega)A$ and $\tilde{G}(\omega) = \tilde{u}(\omega) + \tilde{v}(\omega)A$ for all $\omega \in \Omega$, with   $u,v,\tilde{u},\tilde{v}:\Omega\to\CC.$
For all $\phi \in C_{c}^{\infty}(\Omega,\CC)$, we have  
$\Phi = \phi + 0A \in C_{c}^{\infty}(\Omega,\CFA)$. From Equation~\eqref{eq:A_product}, we obtain 
\begin{eqnarray*}
0 & = & \int_{\Omega} (G \ominus \tilde{G})(\omega) \odot \Phi(\omega)\,d\omega \\
& = & \int_{\Omega} \left[(u(\omega)-\tilde{u}(\omega)) + (v(\omega)-\tilde{v}(\omega))A\right] \odot \left[\phi(\omega) + 0A\right]\,d\omega \\
& = & \int_{\Omega} \phi(\omega)(u(\omega)-\tilde{u}(\omega))\,d\omega  + \left(\int_{\Omega} \phi(\omega)(v(\omega)-\tilde{v}(\omega))\,d\omega \right) A \\
\Rightarrow & & \int_{\Omega} \phi(\omega)(u(\omega)-\tilde{u}(\omega))\,d\omega  = \int_{\Omega} \phi(\omega)(v(\omega)-\tilde{v}(\omega))\,d\omega = 0
\end{eqnarray*}
for all $\phi \in C_{c}^{\infty}(\Omega,\CC).$
Since $\phi$ is arbitrary, the Fundamental Lemma of Calculus of Variations implies $u = \tilde{u}$ and $v = \tilde{v}$ almost everywhere. Therefore, 
$G = \tilde{G}$ almost everywhere.
\end{proof}

\begin{example}\label{ex:weak_deriv}
Consider $F(t) = q_1(t)A$ and $G(t) = q_2(t)A$ where 
\[
q_1(t) = \begin{cases}
te^{ip} & \text{if } -1 < t \leq 0 \\
e^{ip} & \text{if } 0 < t < 1
\end{cases}, \quad 
q_2(t) = \begin{cases}
e^{ip} & \text{if } -1 < t \leq 0 \\
0 & \text{if } 0 < t < 1
\end{cases}
\]
with $p \in [0,\frac{\pi}{2}]$. Let us verify that $G$ is the weak derivative of $F$ in the interval $(-1,0)$. For any test function $\Phi \in C_c^\infty([-1,0],\CFA)$ we have $\Phi(-1)=\Phi(0)=0$, and the weak derivative condition is:
\begin{equation}
\int_{-1}^0 F(\omega) \odot \Phi'(t) d\omega = - \int_{-1}^0 G(\omega) \odot \Phi(\omega) d\omega.
\end{equation}
By integration by parts (Proposition~\ref{partes}(d)), we have:
\begin{align*}
\int_{-1}^0 F(t) \odot \Phi'(t) dt 
&= F(t) \odot \Phi(t)\Big\vert_{-1}^0 \ominus \int_{-1}^0 F'(t) \odot \Phi(t) dt \\
&= - \int_{-1}^0 e^{ip}A \odot \Phi(t) dt \\
&= - \int_{-1}^0 G(t) \odot \Phi(t) dt
\end{align*}
Thus $F' = G$ in the weak sense in $(-1,0)$.
\end{example}

\begin{definition}\label{def:Lp_space}
Let $1 \leq p < \infty$. We define the space 
$L^p(\Omega,\CFA)$ given by 
\begin{equation}
L^p(\Omega,\CFA) = \left\lbrace F \in \FP(\Omega,\CFA) \mid \int_\Omega \|F(\omega)\|_{\tilde{p}}^p \;d\omega < \infty \right\rbrace.    
\end{equation}
Furthermore, consider the function $\|\cdot\|_{\tilde{p}}:L^p(\Omega,\CFA) \to [0,\infty)$ given by 
\begin{equation}
\|F\|_{\tilde{p}} = \left(\int_\Omega \|F(\omega)\|_{\tilde{p}}^p\; d\omega\right)^{\frac{1}{p}} = \left(\int_\Omega 
\lvert u(\omega) + av(\omega)\rvert^p + \lvert v(\omega)\rvert^p
\; d\omega\right)^{\frac{1}{p}},  
\end{equation}
where $F(\omega) = u(\omega) + v(\omega)A$, for all $\omega \in \Omega$ with $u,v:\Omega\to \CC$, and $a$ is the midpoint of the 1-level set of $A$.
\end{definition}

The following theorem associates the space $L^p(\Omega,\CFA)$ with the vector space forms by Cartesian product of the classical complex Banach space $L^p(\Omega,\CC)$. Moreover, it also a suitable equivalence of norms on $L^p(\Omega,\CFA).$  

\begin{theorem}\label{thm:charaterize_Lp}
Let $1 \leq p < \infty$. 
The space $L^p(\Omega,\CFA)$ is isomorphic to the vector space 
$L^p(\Omega,\CC)\times L^p(\Omega,\CC)$ via the isomorphism $(u + vA) \mapsto (u,v)$.

Furthermore, the norm $\|\cdot\|_{\tilde{p}}$ on $L^p(\Omega,\CFA)$ is equivalent to the norm $\|\cdot\|_{p,\infty}$ on $L^p(\Omega,\CFA)$ defined by 
\begin{equation}
\| u + vA\|_{m-p} = \max\{\|u\|_p,\|v\|_p\}    
\end{equation}
satisfying 
\[
\frac{1}{\left\|M^{-1}\right\|_p} \| F\|_{m-p}  \leq  \|F\|_{\tilde{p}} \leq 2^{\frac{1}{p}}\|M\|_p \| F\|_{m-p}\,,
\]
where $M$ is the non-singular matrix given by Equation~\eqref{eq:M}.
\end{theorem}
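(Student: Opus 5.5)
The proof has two parts: first the isomorphism, then the norm equivalence. Both reduce to pointwise facts about $\CC^2$, using Theorem~\ref{cor:banach} and the representation $F = u + vA$ with $(u,v)=\psi^{-1}\circ F$.

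\emph{Isomorphism.} Consider the map $T(u+vA) = (u,v)$. It is well defined and injective, because $\psi$ is a bijection when $A$ is asymmetric. It is linear, because the operations on $\FP(\Omega,\CFA)$ are pointwise and are defined through $\psi$ (Equation~\eqref{eq:sum_cfa_b} and the scalar product). What remains is to show that $T$ maps $L^p(\Omega,\CFA)$ onto $L^p(\Omega,\CC)\times L^p(\Omega,\CC)$. For this I use the pointwise inequalities between $\|(u,v)\|_p=(|u|^p+|v|^p)^{1/p}$ and $\|M(u,v)^T\|_p$, namely
\[
\frac{1}{\|M^{-1}\|_p}\,\|(u,v)\|_p \le \|M(u,v)^T\|_p \le \|M\|_p\,\|(u,v)\|_p ,
\]
where $\|M\|_p$ denotes the induced operator $p$-norm; the left inequality follows from $\|x\|_p=\|M^{-1}Mx\|_p$. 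Raising to the $p$-th power and integrating shows that $\int_\Omega \|F\|_{\tilde p}^p<\infty$ holds if and only if $\int_\Omega |u|^p+|v|^p<\infty$. That in turn holds if and only if both $u,v\in L^p(\Omega,\CC)$. So $T$ is a bijection onto the product space. Measurability of $u$ and $v$ is implicit in the definition, since integrability is defined componentwise in Definition~\ref{def:integral_CFA}.

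\emph{Norm equivalence.} I sandwich $\int_\Omega(|u|^p+|v|^p)$ between multiples of $\max\{\|u\|_p,\|v\|_p\}^p$:
\[
\max\{\|u\|_p^p,\|v\|_p^p\}\le \|u\|_p^p+\|v\|_p^p\le 2\max\{\|u\|_p,\|v\|_p\}^p .
\]
Combining this with the integrated pointwise inequalities gives
\[
\|F\|_{\tilde p}^p \le \|M\|_p^p\,(\|u\|_p^p+\|v\|_p^p)\le 2\|M\|_p^p\|F\|_{m-p}^p ,
\]
and taking $p$-th roots yields the upper bound $2^{1/p}\|M\|_p\|F\|_{m-p}$. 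Conversely,
\[
\|F\|_{\tilde p}^p\ge \|M^{-1}\|_p^{-p}(\|u\|_p^p+\|v\|_p^p)\ge \|M^{-1}\|_p^{-p}\|F\|_{m-p}^p ,
\]
which gives the lower bound. I also need $\|\cdot\|_{m-p}$ and $\|\cdot\|_{\tilde p}$ to be norms on equivalence classes modulo a.e. equality. The first is a norm because it is the max-norm on a product of Banach spaces. The second is then a norm as well: it is the $L^p(\Omega,\CC^2)$ norm of $M(u,v)^T$, so Minkowski applies.

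The main obstacle is being precise about what $\|M\|_p$ means. The statement does not define it, and the constants only come out right if it is read as the induced operator norm on $(\CC^2,\|\cdot\|_p)$. For that operator norm the pointwise bounds $\|Mx\|_p\le\|M\|_p\|x\|_p$ and $\|x\|_p\le\|M^{-1}\|_p\|Mx\|_p$ hold automatically, so I would state this convention explicitly. Everything else is routine.
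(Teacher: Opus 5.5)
Your proposal is correct and follows essentially the same route as the paper: integrate the pointwise bounds $\|M^{-1}\|_p^{-1}\|x\|_p\le\|Mx\|_p\le\|M\|_p\|x\|_p$, then use the sandwich $\max\{\|u\|_p^p,\|v\|_p^p\}\le\|u\|_p^p+\|v\|_p^p\le 2\max\{\|u\|_p,\|v\|_p\}^p$ to get exactly the stated constants. The only minor difference is that you obtain $F\in L^p(\Omega,\CFA)\iff u,v\in L^p(\Omega,\CC)$ from these same norm inequalities, whereas the paper argues algebraically via $u=(u+av)-av$ and the vector-space structure of $L^p(\Omega,\CC)$; both rest on the invertibility of $M$.
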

\begin{proof}
Given $u+vA \in L^p(\Omega,\CFA)$ with $u,v:\Omega\to \CC.$ 
This implies that
\begin{eqnarray*}
\|F\|_{\tilde{p}}^p & = & 
\int_\Omega \|F(\omega)\|_{\tilde{p}}^p \;d\omega 
 < \infty \\
 & = & \int_\Omega \lvert u(\omega) + av(\omega)\rvert^p + \lvert v(\omega)\rvert^p
\; d\omega  
 < \infty \\
\Rightarrow & & \int_\Omega \lvert u(\omega) + av(\omega)\rvert^p 
\; d\omega  < \infty \;\mbox{ and }\; 
\int_\Omega \lvert v(\omega)\rvert^p
\; d\omega  
 < \infty \\
\Rightarrow & & (u + av), v \in L^p(\Omega,\CC).
\end{eqnarray*}
Since $L^p(\Omega,\CC)$ is a vector space, we have $u = ((u + av) - av) \in L^p(\Omega,\CC)$. On the other hand, if $u,v \in L^p(\Omega,\CC)$, then 
$(u + av) \in L^p(\Omega,\CC)$. This implies that
\[
\int_\Omega \lvert u(\omega) + av(\omega)\rvert^p 
\; d\omega  < \infty \;\mbox{ and }\; 
\int_\Omega \lvert v(\omega)\rvert^p
\; d\omega  
 < \infty \Rightarrow (u + vA) \in L^p(\Omega,\CFA)\,.
\]
Hence, the mapping $(u+vA)\mapsto (u,v)$ is indeed a well-defined bijection from $L^p(\Omega,\CFA)$ to $L^p(\Omega,\CC)\times L^p(\Omega,\CC).$ Obviously, this bijective mapping is a linear operator. Therefore, we conclude that the space $L^p(\Omega,\CFA)$ is isomorphic to the vector space 
$L^p(\Omega,\CC)\times L^p(\Omega,\CC)$. 

Let $F = (u+vA) \in L^p(\Omega,\CFA)$ and $M$ be the matrix given by Equation~\eqref{eq:M}. 
Using the consistency of the $p$-norm of matrix and vector, 
for $\omega \in \Omega$ we obtain  
\[
\|F(\omega)\|_{\tilde{p}} = \left\|M\begin{bmatrix}
u(\omega) \\ v(\omega)    
\end{bmatrix} \right\|_p \leq \|M\|_p \left\|\begin{bmatrix}
u(\omega) \\ v(\omega)    
\end{bmatrix} \right\|_p  =  \|M\|_p\left(\lvert u(\omega) \rvert^p + \lvert v(\omega) \rvert^p \right)^{\frac{1}{p}}\,,
\]
where $\|M\|_p$ stands for the $p$-norm of the matrix $M$. Moreover, we have 
\[
\left(\lvert u(\omega) \rvert^p + \lvert v(\omega) \rvert^p \right)^{\frac{1}{p}} = \left\|M^{-1}M\begin{bmatrix}
u(\omega) \\ v(\omega)    
\end{bmatrix} \right\|_p \leq \left\|M^{-1}\right\|_p  \left\|M\begin{bmatrix}
u(\omega) \\ v(\omega)    
\end{bmatrix} \right\|_p = \left\|M^{-1}\right\|_p
\|F(\omega)\|_{\tilde{p}},
\]
which implies that 
\[
\frac{1}{\left\|M^{-1}\right\|_p} \left(\lvert u(\omega) \rvert^p + \lvert v(\omega) \rvert^p \right)^{\frac{1}{p}} \leq \|F(\omega)\|_{\tilde{p}}.
\]
Thus, we have 
\begin{align*}
& \frac{1}{\left\|M^{-1}\right\|_p^p} \left(\lvert u(\omega) \rvert^p + \lvert v(\omega) \rvert^p \right) \leq  \|F(\omega)\|_{\tilde{p}}^p  \leq   \|M\|_p^p\left(\lvert u(\omega) \rvert^p + \lvert v(\omega) \rvert^p \right) \\
\Rightarrow & \frac{1}{\left\|M^{-1}\right\|_p} \left(\int_\Omega  \lvert u(\omega) \rvert^p + \lvert v(\omega) \rvert^p d\omega \right)^{\frac{1}{p}} \leq  \left(\int_\Omega  \|F(\omega)\|_{\tilde{p}}^p d\omega \right)^{\frac{1}{p}} \leq \\ & \qquad \leq   \|M\|_p \left(\int_\Omega \lvert u(\omega) \rvert^p + \lvert v(\omega) \rvert^p d\omega \right)^{\frac{1}{p}} 
\end{align*}
In addition, we have 
\[
\left\lbrace\begin{array}{l}
\|u\|_p^p =  \int_\Omega  \lvert u(\omega) \rvert^p d\omega \leq \int_\Omega  \lvert u(\omega) \rvert^p + \lvert v(\omega) \rvert^p d\omega \leq 2\max\{\|u\|_p^p,\|v\|_p^p\}  \\
\|v\|_p^p =  \int_\Omega  \lvert v(\omega) \rvert^p d\omega \leq \int_\Omega  \lvert u(\omega) \rvert^p + \lvert v(\omega) \rvert^p d\omega \leq 2\max\{\|u\|_p^p,\|v\|_p^p\}
\end{array}\right.
\]
which implies that
\[
\max\{\|u\|_p,\|v\|_p\} \leq \left(\int_\Omega \lvert u(\omega) \rvert^p + \lvert v(\omega) \rvert^p d\omega \right)^{\frac{1}{p}} \leq  2^{\frac{1}{p}}\max\{\|u\|_p,\|v\|_p\}\,.
\]
Combining the inequalities above, we conclude that
\begin{align*}
& \frac{1}{\left\|M^{-1}\right\|_p} \max\{\|u\|_p,\|v\|_p\} \leq  \|F\|_{\tilde{p}} \leq 2^{\frac{1}{p}}\|M\|_p \max\{\|u\|_p,\|v\|_p\}\\
\Rightarrow & 
\frac{1}{\left\|M^{-1}\right\|_p} \| F\|_{m-p}  \leq  \|F\|_{\tilde{p}} \leq 2^{\frac{1}{p}}\|M\|_p \| F\|_{m-p} \,.    
\end{align*}
\end{proof}

\begin{example}\label{ex:L1_function}
Let $\Omega = (0,2\pi)$ and $A \in \RF$ be asymmetric. Consider the function $F(x) = \left(\frac{1}{e^2}ie^{ix}\right)A$. 
From Theorem~\ref{thm:charaterize_Lp}, we have that  $F \in L^1(\Omega,\CFA)$ since $q(x) = \frac{1}{e^2}ie^{ix} \in L^1(\Omega,\CC)$, $r(x) = 0 \in L^1(\Omega,\CC)$. 
\end{example}

\begin{corollary}\label{cor:Lp_banach}
$L^p(\Omega,\mathbb{C}_{\mathcal{F}(A)})$ is a Banach space for $1 \leq p < \infty$.
\end{corollary}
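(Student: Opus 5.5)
The plan is to transfer completeness from the classical space $L^p(\Omega,\CC)$ through Theorem~\ref{thm:charaterize_Lp}. I first check that $\|\cdot\|_{\tilde{p}}$ is a norm on $L^p(\Omega,\CFA)$ (modulo a.e.\ equality, as usual). Writing $F=u+vA$, we have $\|F\|_{\tilde{p}}=\big\|(u+av,\,v)\big\|_{L^p(\Omega,\CC)\times L^p(\Omega,\CC)}$, where the product is normed by $(f,g)\mapsto(\|f\|_p^p+\|g\|_p^p)^{1/p}$. This is the $\ell^p$-combination of two $L^p$ norms composed with the injective linear map $(u,v)\mapsto(u+av,v)$, i.e.\ pointwise multiplication by the invertible matrix $M$ of Equation~\eqref{eq:M}. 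Homogeneity and definiteness are then immediate. The triangle inequality follows from Minkowski's inequality in $L^p(\Omega,\CC)$ together with the Minkowski inequality in $\RR^2$ for the $p$-norm.

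Next I take a Cauchy sequence $(F_k)$, $F_k=u_k+v_kA$, in $(L^p(\Omega,\CFA),\|\cdot\|_{\tilde{p}})$. By the lower bound in Theorem~\ref{thm:charaterize_Lp}, we have $\max\{\|u_k-u_j\|_p,\|v_k-v_j\|_p\}\le \|M^{-1}\|_p\,\|F_k\ominus F_j\|_{\tilde{p}}$. Here I use that the isomorphism $(u+vA)\mapsto(u,v)$ is linear, so that $F_k\ominus F_j$ corresponds to $(u_k-u_j,v_k-v_j)$. Hence $(u_k)$ and $(v_k)$ are Cauchy in $L^p(\Omega,\CC)$. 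By the Riesz--Fischer theorem they converge to some $u,v\in L^p(\Omega,\CC)$.

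I then set $F=u+vA$. By the isomorphism in Theorem~\ref{thm:charaterize_Lp}, $F\in L^p(\Omega,\CFA)$. By the upper bound in that theorem,
\[
\|F_k\ominus F\|_{\tilde{p}}\le 2^{1/p}\|M\|_p\max\{\|u_k-u\|_p,\|v_k-v\|_p\}\to 0,
\]
so $F_k\to F$ and the space is complete.

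There is no real obstacle. The only points needing care are that elements are understood as a.e.-equivalence classes, so that definiteness holds, and that $\|M\|_p,\|M^{-1}\|_p$ are finite since $M$ is nonsingular. Equivalently, one may phrase the whole argument as: $(u,v)\mapsto u+vA$ is a linear bijection from the Banach space $L^p\times L^p$ that is bi-Lipschitz by Theorem~\ref{thm:charaterize_Lp}, and completeness is preserved under such maps.
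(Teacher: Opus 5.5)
Your proposal is correct and follows the paper's intended route. The paper gives no separate proof: it treats this as an immediate corollary of Theorem~\ref{thm:charaterize_Lp}, namely the linear bijection $u+vA\mapsto(u,v)$ onto the Banach space $L^p(\Omega,\CC)\times L^p(\Omega,\CC)$ together with the two-sided norm equivalence. Your Cauchy-sequence argument simply writes out that transfer of completeness, and your check that $\|\cdot\|_{\tilde{p}}$ is a genuine norm on a.e.-classes (via Minkowski and the invertibility of $M$) fills in a detail the paper takes for granted.
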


\begin{definition}[Fuzzy Sobolev Space]
For $1 \leq p < \infty$ and $k \in \mathbb{N}$, the fuzzy Sobolev space $W^{k,p}(\Omega,\CFA)$ consists of all functions $F \in L^{p}(\Omega,\CFA)$ possessing weak derivatives $D^{\alpha} F \in L^{p}(\Omega,\mathbb{C}_{\mathcal{F}(A)})$ for all multi-index $\alpha$ with $\mid\alpha\mid\leq k$. 
When $p=2$, we simply denote $W^{k,2}(\Omega, \CFA)$ by $H^k(\Omega, \CFA)$ for all integer $k \geq 0$. Note that $H^0(\Omega, \CFA)=L^2(\Omega, \CFA)$. 

For $p \geq 1$, we define the norm $\|\cdot\|_{\tilde{p}}$ on $W^{k,p}(\Omega,\CFA)$ given, for every $F \in W^{k,p}(\Omega,\CFA)$, by
\begin{equation}
\|F\|_{\tilde{p},k} = \sum_{\mid\alpha\mid \leq k} \|D^{\alpha}F\|_{\tilde{p}}.
\end{equation} 
\end{definition}

The following theorem states some properties of the fuzzy Sobolev space $W^{k,p}(\Omega,\CFA)$.  

\begin{theorem}[Sobolev Space Properties]\label{thm:sobolev_props}
Assume $F,G \in W^{k,p}(\Omega,\CFA)$, for $k \in \mathbb{N}$, $\mid \alpha \mid \leq k$, $1 \leq p \leq \infty$. The following properties hold.
\begin{enumerate}
\item[i)] Regularity reduction: For any multi-indices $\alpha,\beta$ with $\mid \alpha \mid +\mid \beta \mid \leq k$, the commutation property is satisfied:
\begin{equation}
D^{\beta}(D^{\alpha} F) = D^{\alpha}(D^{\beta} F) = D^{\alpha+\beta}F
\end{equation}
\item[ii)] Linearity: For all $c_1,c_2 \in \CC$ :
\begin{equation}
D^{\alpha}(c_1 F \oplus c_2  G) = c_1  D^{\alpha}F \oplus c_2  D^{\alpha}G
\end{equation}
\end{enumerate}
\end{theorem}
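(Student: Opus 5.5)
The plan is to reduce both properties to the classical theory of weak derivatives for complex-valued functions. The bridge is the componentwise description of the $A$-cross product and of the integral.

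\textbf{Step 1 (componentwise characterization).} Write $F = u + vA$ and $G = g + hA$ with $u,v,g,h \in L^1_{\text{loc}}(\Omega,\CC)$, where $G$ here denotes any candidate weak derivative. We show that $G = D^\alpha F$ in the sense of Equation~\eqref{weak} if and only if $g = D^\alpha u$ and $h = D^\alpha v$ in the classical weak sense.
\begin{itemize}
\item[$(\Leftarrow)$] Take a test function $\Phi = \phi + \chi A$ with $\phi,\chi \in C_c^\infty(\Omega,\CC)$. Then $D^\alpha\Phi = D^\alpha\phi + D^\alpha\chi\, A$ by Proposition~\ref{prop:Basic_calculus_property}(c). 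Expanding $F\odot D^\alpha\Phi$ with Equation~\eqref{eq:A_product} gives
\[
(u D^\alpha\phi - a^2 v D^\alpha\chi) + (u D^\alpha\chi + v D^\alpha\phi + 2a v D^\alpha\chi)A .
\]
Integrate this with Definition~\ref{def:integral_CFA}. Each resulting term has the form $\int u\,D^\alpha\psi$ or $\int v\,D^\alpha\psi$ for a scalar test function $\psi$. Replace each by $(-1)^{|\alpha|}\int g\,\psi$ or $(-1)^{|\alpha|}\int h\,\psi$, respectively. The outcome is exactly the expansion of $(-1)^{|\alpha|}\int G\odot\Phi$.
\item[$(\Rightarrow)$] Take $\Phi = \phi + 0A$, as in the proof of Proposition~\ref{prop:weak_unique}. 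The two coefficients of Equation~\eqref{weak} become the classical identities
\[
\int u\,D^\alpha\phi = (-1)^{|\alpha|}\int g\,\phi, \qquad \int v\,D^\alpha\phi = (-1)^{|\alpha|}\int h\,\phi .
\]
\end{itemize}
Combining this with Theorem~\ref{thm:charaterize_Lp}, we get: $F \in W^{k,p}(\Omega,\CFA)$ if and only if $u,v \in W^{k,p}(\Omega,\CC)$, and then $D^\alpha F = D^\alpha u + D^\alpha v\,A$.

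\textbf{Step 2 (property i).} Apply the classical fact to $u$ and to $v$ separately: for weakly differentiable complex functions, $D^\beta(D^\alpha u) = D^\alpha(D^\beta u) = D^{\alpha+\beta}u$ whenever $|\alpha|+|\beta|\le k$. This follows from $D^\alpha D^\beta\phi = D^{\alpha+\beta}\phi$ for smooth $\phi$, a consequence of the Clairaut--Schwarz theorem, Proposition~\ref{prop:Basic_calculus_property}(f). The computation is
\[
\int u\,D^\beta D^\alpha\phi = \int u\,D^{\alpha+\beta}\phi = (-1)^{|\alpha|+|\beta|}\int D^{\alpha+\beta}u\,\phi ,
\]
which shows that $D^{\alpha+\beta}u$ is the weak $\beta$-derivative of $D^\alpha u$; the other order is symmetric. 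Step 1 then transfers the result back to $F$. Alternatively, one can argue directly in $\CFA$, using $D^\beta D^\alpha\Phi = D^{\alpha+\beta}\Phi$ together with the definition twice. This direct route needs the intermediate claim that $D^{\alpha+\beta}F$ serves as the weak $\beta$-derivative of $D^\alpha F$. That claim is precisely the computation above.

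\textbf{Step 3 (property ii).} For $\Phi$ a test function, use Proposition~\ref{prop:Basic_calculus_property}(g) with $\lambda \in \CC \subset \CFA$. For a complex scalar, $c\odot W = cW$, since $m(c) = c$ and Equation~\eqref{eq:A_product} reduces accordingly. Linearity of the integral and distributivity of $\odot$ then give
\[
\int (c_1F\oplus c_2G)\odot D^\alpha\Phi = c_1\int F\odot D^\alpha\Phi \oplus c_2\int G\odot D^\alpha\Phi .
\]
Apply Equation~\eqref{weak} to $F$ and to $G$, and recombine the two terms. This shows that $c_1D^\alpha F\oplus c_2 D^\alpha G$ is the weak derivative of $c_1F\oplus c_2G$. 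By Proposition~\ref{prop:weak_unique}, this weak derivative is unique almost everywhere.

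The main obstacle is Step 1, the bookkeeping of the $A$-cross product in the weak formulation. It is what justifies that the fuzzy weak derivative exists exactly when the componentwise one does. One subtlety needs care: the statement allows $p=\infty$ even though the Sobolev space was defined only for $p<\infty$. The argument never uses $p$, however, so it works verbatim for $L^\infty$.
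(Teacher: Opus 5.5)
Your proposal is correct. For (ii) it follows the paper's argument: distribute $\odot$, use linearity of the integral, apply the weak identity to $F$ and $G$, recombine, and invoke Proposition~\ref{prop:weak_unique}.

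For (i) your main route differs. The paper stays inside $\CFA$. It writes $(-1)^{|\alpha+\beta|}\int D^{\alpha+\beta}F\odot\Phi=\int F\odot D^{\alpha}(D^{\beta}\Phi)=(-1)^{|\alpha|}\int D^{\alpha}F\odot D^{\beta}\Phi$, using the fuzzy Clairaut--Schwarz theorem on test functions. It then continues to $(-1)^{|\alpha|+|\beta|}\int D^{\beta}(D^{\alpha}F)\odot\Phi$ and concludes by uniqueness. You instead first prove that fuzzy weak derivatives are exactly componentwise classical weak derivatives, testing with $\phi+\chi A$ for one direction and $\phi+0A$ for the other. You then import the classical commutation rule for $u$ and $v$.

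The paper's route is shorter and needs no cross-product bookkeeping. Yours costs the Step 1 computation but yields a reusable lemma: $D^\alpha F=D^\alpha u+D^\alpha v\,A$ holds for weak derivatives. The paper later uses exactly this fact in Theorem~\ref{thm:characterization_Sobolev}, while citing only Proposition~\ref{prop:Basic_calculus_property}(c), which is about classical derivatives.

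Your remark about the intermediate claim is also to the point. The paper's fourth equality presupposes that $D^\beta(D^\alpha F)$ exists; this is asserted beforehand, with misprinted indices. But the first three equalities already show directly that $D^{\alpha+\beta}F$ is the weak $\beta$-derivative of $D^\alpha F$.

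One small omission in your Step 2 display: it starts from $\int u\,D^\beta D^\alpha\phi$. You should write out the link $\int D^\alpha u\,D^\beta\phi=(-1)^{|\alpha|}\int u\,D^\alpha D^\beta\phi$, which is the definition of $D^\alpha u$ applied to the test function $D^\beta\phi$. This produces the sign $(-1)^{|\alpha|}(-1)^{|\alpha|+|\beta|}=(-1)^{|\beta|}$.
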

\begin{proof}
To prove item i), consider an arbitrary $\Phi \in C^{\infty}_{c}(\Omega,\CFA)$. 
For the fuzzy Clairaut-Schwarz theorem, we have $D^{\alpha}(D^{\beta}\Phi) = D^{\beta}(D^{\alpha}\Phi) = D^{\alpha+\beta} \Phi$. Moreover,  $D^{\beta}\Phi, D^{\alpha}\Phi \in C^{\infty}_{c}(\Omega,\CFA)$. 
Note that $F,G \in W^{k,p}(\Omega,\CFA)$ implies that $D^\alpha F \in W^{k,p-\lvert\beta\rvert}(\Omega,\CFA)$ and $D^\beta F\in W^{k,p-\lvert\alpha\rvert}(\Omega,\CFA)$. This implies that 
\begin{align*}
(-1)^{\lvert \alpha + \beta \rvert}\int_{\Omega} D^{\alpha+\beta}F(\omega) \odot \Phi(\omega) d\omega & = 
\int_{\Omega} F(\omega) \odot D^{\alpha+\beta}\Phi(\omega) d\omega \\ & = 
\int_{\Omega} F(\omega) \odot D^{\alpha}(D^{\beta}\Phi)(\omega) d\omega \\ 
& = (-1)^{\lvert \alpha \rvert} \int_{\Omega} D^{\alpha}F(\omega) \odot D^{\beta}\Phi(\omega) d\omega \\
& = (-1)^{\lvert \alpha \rvert}(-1)^{\lvert \beta \rvert} \int_{\Omega} D^{\beta}D^{\alpha}F(\omega) \odot \Phi(\omega) d\omega \\
& = (-1)^{\lvert \alpha +\beta \rvert} \int_{\Omega} D^{\beta}(D^{\alpha}F)(\omega) \odot \Phi(\omega) d\omega
\end{align*}
for all $\Phi \in C^{\infty}_{c}(\Omega,\CFA)$. 
From Proposition~\ref{prop:weak_unique}, we conclude that 
$D^{\beta}(D^{\alpha}F) = D^{\alpha + \beta}F$ almost everywhere. 
Similar, one can prove that $D^{\alpha}(D^{\beta}F) = D^{\alpha + \beta}F$ almost everywhere.

To prove (ii), we can use the distributive property of $\odot$ and the linear property of the integral:
\begin{align*}
& \int_{\Omega} (c_1F \oplus c_2G)(\omega) \odot D^{\alpha}\Phi(\omega) d\omega = \\
& \quad = c_1 \int_{\Omega} F(\omega) \odot D^{\alpha}\Phi(\omega)d\omega \oplus c_2 \int_{\Omega} G(\omega) \odot D^{\alpha}\Phi(\omega) d\omega \\
&\quad  = (-1)^{\mid\alpha\mid} \left[ c_1 \int_{\Omega} D^{\alpha}F(\omega) \odot \Phi(\omega) d\omega \oplus c_2 \int_{\Omega} D^{\alpha}G(\omega) \odot \Phi(\omega)d\omega \right] \\
&\quad = (-1)^{\mid\alpha\mid} \int_{\Omega} (c_1D^{\alpha}F \oplus c_2D^{\alpha}G)(\omega) \odot \Phi(\omega) d\omega
\end{align*}
for all $\Phi \in C^{\infty}_{c}(\Omega,\CFA).$
\end{proof}

\begin{theorem}\label{thm:characterization_Sobolev}
Consider the spaces $(W^{k,p}(\Omega,\CFA), \|\cdot\|_{\tilde{p},k})$ and  
$\bigl(W^{k,p}(\Omega,\CC)\times W^{k,p}(\Omega,\CC),\, \|\cdot\|_{m-p,k}\bigr)$,
where the norm $\|\cdot\|_{m-p,k}$ is defined by
\[
\|(u,v)\|_{m-p,k}
    := \sum_{|\alpha|\le k} \max\{\|D^\alpha u\|_{p},\, \|D^\alpha v\|_{p}\}.
\]

Le $\Psi : W^{k,p}(\Omega,\CFA) \longrightarrow  W^{k,p}(\Omega,\CC)\times W^{k,p}(\Omega,\CC)$ be the mapping given by 
\[
F = u + vA \longmapsto \Psi(F) = (u,v).
\]
The operator $\Psi$ is a linear homeomorphism, that is, $\Psi$ is bijective and both $\Psi$ and $\Psi^{-1}$ are continuous.  
Consequently, the fuzzy Sobolev space $(W^{k,p}(\Omega,\CFA),\|\cdot\|_{p,k})$ is isomorphic as a vector space and homeomorphic as a normed space to $W^{k,p}(\Omega,\CC)\times W^{k,p}(\Omega,\CC)$.
\end{theorem}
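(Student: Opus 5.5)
The argument has two parts. First I show that weak derivatives in $\CFA$ are exactly componentwise classical weak derivatives, so that $\Psi$ is a well-defined bijection. Then I derive norm equivalence from Theorem~\ref{thm:charaterize_Lp}.

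\textbf{Key lemma.} Let $F=u+vA$ and $G=g+hA$ be in $L^1_{\text{loc}}(\Omega,\CFA)$. I claim $G=D^\alpha F$ weakly in the sense of Equation~\eqref{weak} if and only if $g=D^\alpha u$ and $h=D^\alpha v$ weakly in the classical sense.

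For the forward direction, test Equation~\eqref{weak} with $\Phi=\phi+0A$, where $\phi\in C_c^\infty(\Omega,\CC)$, exactly as in the proof of Proposition~\ref{prop:weak_unique}. By Equation~\eqref{eq:A_product}, $F\odot\Phi=\phi u+\phi vA$. By Proposition~\ref{prop:Basic_calculus_property}(c), $D^\alpha\Phi=D^\alpha\phi+0A$. Comparing the coefficients $1$ and $A$, which is legitimate by Theorem~\ref{cor:banach} since $\psi$ is injective, gives
\[
\int u\,D^\alpha\phi=(-1)^{|\alpha|}\int g\,\phi \quad\text{and}\quad \int v\,D^\alpha\phi=(-1)^{|\alpha|}\int h\,\phi .
\]

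For the converse, take a general test function $\Phi=\phi+\eta A$. Expanding $F\odot D^\alpha\Phi$ with Equation~\eqref{eq:A_product} gives
\[
(u\,D^\alpha\phi-a^2v\,D^\alpha\eta)+(u\,D^\alpha\eta+v\,D^\alpha\phi+2a\,v\,D^\alpha\eta)A .
\]
Each term is a product of one of $u,v$ with a classical test function. The classical identities, applied term by term together with linearity of the integral (Proposition~\ref{prop:Basic_calculus_property}(g)), therefore give $(-1)^{|\alpha|}\int G\odot\Phi$.

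\textbf{Bijectivity.} By the lemma and Theorem~\ref{thm:charaterize_Lp}, for $|\alpha|\le k$ we have $D^\alpha F=D^\alpha u+D^\alpha vA\in L^p(\Omega,\CFA)$ if and only if $D^\alpha u,D^\alpha v\in L^p(\Omega,\CC)$. So $F\in W^{k,p}(\Omega,\CFA)$ if and only if $u,v\in W^{k,p}(\Omega,\CC)$. Hence $\Psi$ is well defined and onto, with inverse $(u,v)\mapsto u+vA$. Linearity is immediate from Equation~\eqref{eq:sum_cfa_b} and the scalar multiplication in Equation~\eqref{eq:standar_scalar_product} applied to coefficients. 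Injectivity follows from uniqueness of the coefficients in Theorem~\ref{cor:banach}, with functions identified almost everywhere as in Proposition~\ref{prop:weak_unique}.

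\textbf{Continuity.} Apply the inequality of Theorem~\ref{thm:charaterize_Lp} to each $D^\alpha F$:
\[
\frac{1}{\|M^{-1}\|_p}\max\{\|D^\alpha u\|_p,\|D^\alpha v\|_p\}\le\|D^\alpha F\|_{\tilde p}\le 2^{1/p}\|M\|_p\max\{\|D^\alpha u\|_p,\|D^\alpha v\|_p\}.
\]
Summing over $|\alpha|\le k$ yields
\[
\|M^{-1}\|_p^{-1}\,\|\Psi(F)\|_{m-p,k}\le\|F\|_{\tilde p,k}\le 2^{1/p}\|M\|_p\,\|\Psi(F)\|_{m-p,k}.
\]
Since $\Psi$ is linear, these bounds make $\Psi$ and $\Psi^{-1}$ bounded, hence continuous. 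This proves the homeomorphism claim.

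\textbf{Main obstacle.} The delicate step is the converse direction of the lemma. There one must check that the $\odot$-mixing of the coefficients, in particular the $a^2$ and $2a$ cross terms, still lands in the form of Equation~\eqref{weak}. The way through is to observe that the expansion of $G\odot\Phi$ has the same bilinear pattern in $(g,h)$ and $(\phi,\eta)$ as the expansion of $F\odot D^\alpha\Phi$ has in $(u,v)$ and $(D^\alpha\phi,D^\alpha\eta)$. Matching term by term, for example $\int v\,D^\alpha\eta=(-1)^{|\alpha|}\int h\,\eta$, then closes the argument.
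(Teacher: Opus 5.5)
Your proposal is correct and follows the paper's route. Bijectivity of $\Psi$ comes from the componentwise form of $D^\alpha F$ together with Theorem~\ref{thm:charaterize_Lp}, and continuity of $\Psi$ and $\Psi^{-1}$ comes from summing the $L^p$ norm equivalence over $|\alpha|\le k$. The one difference is to your credit: the paper justifies $D^\alpha F = D^\alpha u + (D^\alpha v)A$ by citing Proposition~\ref{prop:Basic_calculus_property}(c), which concerns classical derivatives, whereas your key lemma actually proves it for weak derivatives (testing with $\phi+0A$ as in Proposition~\ref{prop:weak_unique} in one direction, and expanding $\odot$ term by term in the other).
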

\begin{proof}
On the one hand, the space $W^{k,p}(\Omega,\CFA)$ is clearly a vector space, since $D^\alpha$ is a linear operator by Proposition~\ref{thm:sobolev_props}. It is also evident that the function $\|\cdot\|_{\tilde{p},k}$ is a norm on $W^{k,p}(\Omega,\CFA)$, since $\|\cdot\|_{\tilde{p}}$ is a norm on $L^p(\Omega,\CFA)$. On the other hand, $\bigl(W^{k,p}(\Omega,\CC)\times W^{k,p}(\Omega,\CC),\, \|\cdot\|_{m-p,k}\bigr)$ is a Banach space because it corresponds to the Cartesian product of two classical Sobolev spaces. Note that one can easily verify that $\|\cdot\|_{m-p,k}$ is indeed a norm on 
$W^{k,p}(\Omega,\CC)\times W^{k,p}(\Omega,\CC)$. 

Since the vector addition and scalar multiplication in $W^{k,p}(\Omega,\CFA)$ and $W^{k,p}(\Omega,\CC)\times W^{k,p}(\Omega,\CC)$ are defined pointwise, it follows that $\Psi$ is a linear operator. Given $F,G \in W^{k,p}(\Omega,\CC)$ with $F = u + vA$ and $G = z + wA$, $u,v,z,w:\Omega\to \CC$. From Proposition~\ref{prop:Basic_calculus_property}(c) and Theorem~\ref{thm:charaterize_Lp}, the fact $D^\alpha F,D^\alpha G \in L^p(\Omega,\CFA)$ implies that $D^\alpha F = (D^\alpha u) + (D^\alpha v) A$, $D^\alpha G = (D^\alpha z) + (D^\alpha w) A$, and  $D^\alpha u, D^\alpha v, D^\alpha z, D^\alpha w \in L^p(\Omega,\CC)$ for all multi-index $\alpha$ satisfying $\lvert \alpha \rvert \leq k$. Conversely, if $u,v,z,w \in L^p(\Omega,\CC)$, then $D^\alpha u, D^\alpha v, D^\alpha z, D^\alpha w \in L^p(\Omega,\CC)$ for all multi-index $\alpha$ satisfying $\lvert \alpha \rvert \leq k$. By applying Theorem~\ref{thm:charaterize_Lp} and Proposition~\ref{prop:Basic_calculus_property}, we obtain 
$D^\alpha F = (D^\alpha u) + (D^\alpha v) A \in  L^p(\Omega,\CFA)$, $D^\alpha G = (D^\alpha z) + (D^\alpha w) A  \in  L^p(\Omega,\CFA)$ for all $\lvert \alpha \rvert \leq k$. Therefore, we conclude that $\Psi$ is a bijection. 

By Theorem~\ref{thm:charaterize_Lp}, for every multi-index $\alpha$, $\lvert \alpha \rvert \leq k$ and $F=(u+vA) \in W^{k,p}(\Omega,\CFA)$, we have 
\[
\frac{1}{\left\|M^{-1}\right\|_p} \| D^\alpha F\|_{m-p}  \leq  \|D^\alpha F\|_{\tilde{p}} \leq 2^{\frac{1}{p}}\|M\|_p \| D^\alpha F\|_{m-p}\,.
\]
This implies that 
\[
\frac{1}{\left\|M^{-1}\right\|_p} \left(\underbrace{\sum_{\lvert \alpha\rvert \leq k} \| D^\alpha F\|_{m-p}}_{= \|(u,v)\|_{m-p,k}} \right) \leq  \underbrace{\sum_{\lvert \alpha\rvert \leq k}\|D^\alpha F\|_{\tilde{p}}}_{= \|F\|_{\tilde{p},k}} \leq 2^{\frac{1}{p}}\|M\|_p \left(\underbrace{\sum_{\lvert \alpha\rvert \leq k}\| D^\alpha F\|_{m-p}}_{= \|(u,v)\|_{m-p,k}} \right)
\]
\[
\Rightarrow \frac{1}{\left\|M^{-1}\right\|_p} \|(u,v)\|_{m-p,k} \leq \|F\|_{\tilde{p},k} \leq 
2^{\frac{1}{p}}\|M\|_p \|(u,v)\|_{m-p,k}\,.
\]
The last inequalities ensure that both $\Psi$ and $\Psi^{-1}$ are continuous. 
\end{proof}

As an immediate consequence of Theorem~\ref{thm:characterization_Sobolev} is that the fuzzy Sobolev space is complete, since the space $W^{k,p}(\Omega,\CC)\times W^{k,p}(\Omega,\CC)$ is a Banach space.  

\begin{corollary}[Completeness of Fuzzy Sobolev Spaces]\label{cor:sobolev_banach}
Let $k \in \mathbb{N}$ and $1 \leq p < \infty$. The fuzzy Sobolev space $(W^{k,p}(\Omega,\CFA),\|\cdot\|_{\tilde{p},k})$ is a Banach space.
\end{corollary}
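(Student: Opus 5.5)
The plan is to transfer completeness from the classical side through the operator $\Psi$ of Theorem~\ref{thm:characterization_Sobolev}. Let $(F_n)$ be a Cauchy sequence in $(W^{k,p}(\Omega,\CFA),\|\cdot\|_{\tilde{p},k})$ and write $F_n = u_n + v_nA$, so $\Psi(F_n)=(u_n,v_n)$. The two-sided estimate established in the proof of Theorem~\ref{thm:characterization_Sobolev},
\[
\frac{1}{\left\|M^{-1}\right\|_p} \|(u,v)\|_{m-p,k} \leq \|F\|_{\tilde{p},k} \leq 2^{\frac{1}{p}}\|M\|_p \|(u,v)\|_{m-p,k},
\]
is applied to the differences $F_n\ominus F_m$, using linearity of $\Psi$. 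This shows that $(u_n,v_n)$ is Cauchy in $\|\cdot\|_{m-p,k}$. In particular, for every $|\alpha|\le k$ the sequences $(D^\alpha u_n)$ and $(D^\alpha v_n)$ are Cauchy in $L^p(\Omega,\CC)$, so $(u_n)$ and $(v_n)$ are Cauchy in the classical Sobolev norm $\sum_{|\alpha|\le k}\|D^\alpha\cdot\|_p$.

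Next, the classical fact that $W^{k,p}(\Omega,\CC)$ is a Banach space for $1\le p<\infty$ gives limits $u,v\in W^{k,p}(\Omega,\CC)$ with $D^\alpha u_n\to D^\alpha u$ and $D^\alpha v_n\to D^\alpha v$ in $L^p$ for all $|\alpha|\le k$. Hence $(u_n,v_n)\to(u,v)$ in $\|\cdot\|_{m-p,k}$, since this norm is a finite sum of maxima of the classical seminorms. Because $\Psi$ is a bijection by Theorem~\ref{thm:characterization_Sobolev}, we can set $F=\Psi^{-1}(u,v)=u+vA\in W^{k,p}(\Omega,\CFA)$. The upper bound in the displayed inequality then yields $\|F_n\ominus F\|_{\tilde{p},k}\le 2^{1/p}\|M\|_p\|(u_n-u,v_n-v)\|_{m-p,k}\to0$.

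The only delicate point is the well-definedness of $\Psi^{-1}$ on the classical product. We must know that the fuzzy weak derivative of $u+vA$ is $D^\alpha u + (D^\alpha v)A$, so that $F$ really lies in the fuzzy Sobolev space and its fuzzy Sobolev norm is computed componentwise. This is exactly the bijectivity part of Theorem~\ref{thm:characterization_Sobolev}, so it may be assumed. If one wants to double-check it, the check is to expand $(u+vA)\odot D^\alpha\Phi$ via Equation~\eqref{eq:A_product} with $\Phi=\phi+\eta A$. This gives four scalar integrals, and the classical weak-derivative identities for $u$ and $v$ against $\phi$ and $\eta$ verify Equation~\eqref{weak} component by component.
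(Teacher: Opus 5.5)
Your proposal is correct and follows the same route as the paper, which obtains the corollary directly from the linear homeomorphism $\Psi$ of Theorem~\ref{thm:characterization_Sobolev} together with the completeness of $W^{k,p}(\Omega,\CC)\times W^{k,p}(\Omega,\CC)$. Your Cauchy-sequence argument unpacks that one-line deduction, and it correctly relies on the two-sided norm bounds (boundedness of both $\Psi$ and $\Psi^{-1}$), which is what actually transfers completeness.
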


The novelty here lies in the consistent use of the $A$-cross product ($\odot$) and Sobolev norms $\|\cdot\|_{\tilde{p},k}$ to maintain algebraic closure while preserving the analytic structure of solutions.

\section{Fourier transform in $\CFA$}\label{sec:Fourier}

\begin{definition}[Fourier transform in $L^{1}(\RR^n,\CFA)$]\label{DefinitionTFourier}
The Fourier transform of a function $F\in L^{1}(\RR^n,\CFA)$ is the function  $\widehat{F}:\RR^{n}\to\CFA$ defined by
\begin{equation}\label{eq:Fourier}
\widehat{F}(\omega) = \int_{\RR^n}{F(\xi)e^{-i\langle \omega,\xi\rangle}d\xi}\,,
\end{equation}
where $\langle \omega,\xi\rangle$ is the Euclidean inner product of $\omega \in \RR^n$ and $\xi \in \RR^n$. 
\end{definition}

For $F \in L^{1}(\RR^n,\CFA)$, there exist $u,v \in L^{1}(\RR^n,\CC)$ such that $F = u + vA.$ Thus, it follows that 
\begin{align}\label{eq:Fourier_coef} \nonumber
\widehat{F}(\omega) & =\int_{\RR^n}{F(\xi)e^{-i\langle \omega,\xi\rangle}d\xi} \\ \nonumber
 & =\underbrace{\int_{\RR^n}u(\xi)e^{-i\langle \omega,\xi\rangle} d\xi}_{=\hat{u}(\xi)} + \underbrace{\left(\int_{\RR^n}v(\xi)e^{-i\langle \omega,\xi\rangle} d\xi\right)}_{=\hat{v}(\xi)}A \\
 & = \hat{u}(\xi) + \hat{v}(\xi)A\,,
\end{align}
where $\hat{u}(\xi)$ and $\hat{v}(\xi)$ correspond to the classical Fourier transform of $u$ and $v$, respectively. 

The connections between the fuzzy Lebesgue space and the product of classical Lebesgue spaces, as well as between the fuzzy Fourier transform and the classical Fourier transform, allow us to transfer many well-known results from the crisp setting to the fuzzy one. 

\begin{proposition}\label{prop:Prop_Fourier}
Let $F,G \in L^1(\RR^n,\CFA)$, $a \in \CC$, $B\in \CFA$, and $x \in \RR^n$.
\begin{itemize}
    \item[1.] Fuzzy Riemann-Lebesgue Lemma: $\widehat{F} \in C^0(\RR^n,\CFA)$, $\|\widehat{F}\|_{\tilde{\infty}} \leq \|\widehat{F}\|_{\tilde{1}}$, and $\widehat{F}(\omega) \to 0$ when $\lvert \omega \rvert \to \infty$; 
    \item[2.] $\left(\widehat{F\left(\frac{\xi}{a}\right)}\right)(\omega) =  a^n \widehat{F}(a\omega)$;
    \item[3.] $\left(\widehat{e^{i\langle x, \xi\rangle} F\left(\xi\right)}\right)(\omega) =  \widehat{F}(\omega - x)$;
    \item[4.] $\left(\widehat{F\left(\xi -x\right)}\right)(\omega) =  e^{-i\langle x, \omega\rangle} \widehat{F}(\omega)$;
    \item[5.] If $F \in C^1(\RR^n,\CFA)$ and $\frac{\partial}{\partial \xi_i}F \in L^1(\RR^n,\CFA)$, $i\in \{1,\ldots,n\}$, then 
    \begin{equation}
        \widehat{\frac{\partial}{\partial \xi_i}F}(\omega) = i\omega_i\widehat{F}(\omega)
    \end{equation}
    \item[6.] Inverse fuzzy Fourier transform: If $\hat{F} \in L^1(\RR^n,\CFA)$, then 
    \begin{equation}
        F(\xi) = \frac{1}{(2\pi)^n}\int_{\RR^n}\widehat{F}(\omega)e^{i\langle \omega, \xi\rangle} \,d\omega \qquad \mbox{ a.e.}
    \end{equation}    
    \item[7.] Linearity: $\widehat{F\oplus B\odot G}(\omega) = \widehat{F}(\omega)\oplus B\odot\widehat{G}(\omega)$;
    \item[8.] $\left(\widehat{F\star G}\right)(\omega) =  \widehat{F}(\omega)\odot \widehat{G}(\omega)$ with $(F\star G)(\xi) = \int_{\RR^n}F(z)\odot G(\xi - z)\,dz$. 
    \item[9.] If $F \in L^2(\RR^n,\CFA)$, then $\widehat{F} \in L^2(\RR^n,\CFA)$ and 
    \[
    \|\widehat{F}\|_{\tilde{2}} \leq (2\pi)^{\frac{n}{2}}\sqrt{2} \|{F}\|_{\tilde{2}}\,. 
    \]    
\end{itemize}
\end{proposition}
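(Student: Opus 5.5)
The whole proof rests on the coefficient representation \eqref{eq:Fourier_coef}: for $F=u+vA\in L^1(\RR^n,\CFA)$ we have $\widehat F=\hat u+\hat v A$. Every item will be reduced to the corresponding classical statement for $u$ and $v$ (and $z,w$ when $G=z+wA$), using Theorem~\ref{thm:charaterize_Lp} to pass between $L^p(\RR^n,\CFA)$ and $L^p(\RR^n,\CC)^2$.

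\textbf{Items 1--6.} For item 1, continuity and decay at infinity follow from the classical Riemann--Lebesgue lemma for $u$ and $v$, together with Proposition~\ref{prop:Basic_calculus_property}(b) and Corollary~\ref{cor:equivalent_norms}. The bound comes from Proposition~\ref{prop:Basic_calculus_property}(m) (triangle inequality for the integral) and $|e^{-i\langle\omega,\xi\rangle}|=1$, which give $\|\widehat F(\omega)\|_{\tilde 1}\le \int\|F\|_{\tilde 1}$; I read the right-hand side of the stated inequality as $\|F\|_{\tilde 1}$. Items 2--4 are changes of variables applied to $u$ and $v$ separately; in item 2, $a$ is taken real and positive, as in the classical identity. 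Item 5 is the classical derivative rule applied to $u,v\in C^1$ with $\partial_i u,\partial_i v\in L^1$, using Proposition~\ref{prop:Basic_calculus_property}(c) to identify $\partial_i F=\partial_i u+\partial_i v\,A$. Item 6 is classical Fourier inversion for $u$ and $v$, recombined through Definition~\ref{def:integral_CFA}.

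\textbf{Item 7.} Write $B=b_1+b_2A$. By \eqref{eq:A_product},
\[
F\oplus B\odot G=(u+b_1z-a^2b_2w)+(v+b_1w+b_2z+2ab_2w)A.
\]
The classical transform is linear on each coefficient, so
\[
\widehat{F\oplus B\odot G}=(\hat u+b_1\hat z-a^2b_2\hat w)+(\hat v+b_1\hat w+b_2\hat z+2ab_2\hat w)A,
\]
and by \eqref{eq:A_product} again this is $\widehat F\oplus B\odot\widehat G$. Alternatively, Proposition~\ref{prop:Basic_calculus_property}(g) gives the same conclusion directly.

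\textbf{Item 8.} By \eqref{eq:A_product},
\[
F\star G=(u*z-a^2\,v*w)+(u*w+z*v+2a\,v*w)A,
\]
where $*$ is the classical convolution and each convolution lies in $L^1$ by Young's inequality. Applying the classical convolution theorem $\widehat{f*g}=\hat f\hat g$ to each term gives
\[
\widehat{F\star G}=(\hat u\hat z-a^2\hat v\hat w)+(\hat u\hat w+\hat z\hat v+2a\hat v\hat w)A=\widehat F\odot\widehat G.
\]

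\textbf{Item 9} is the step I expect to be the main obstacle, because it requires careful constant tracking. The Fourier transform must be understood as the Plancherel extension to $L^2$. I will estimate the coefficients directly with $p=2$:
\[
\|\widehat F\|_{\tilde 2}^2=\|\hat u+a\hat v\|_2^2+\|\hat v\|_2^2.
\]
Linearity gives $\hat u+a\hat v=\widehat{u+av}$, so Plancherel ($\|\hat f\|_2^2=(2\pi)^n\|f\|_2^2$) yields
\[
\|\widehat F\|_{\tilde 2}^2=(2\pi)^n\bigl(\|u+av\|_2^2+\|v\|_2^2\bigr)=(2\pi)^n\|F\|_{\tilde 2}^2.
\]
This is in fact an equality with constant $(2\pi)^{n/2}$, which is sharper than the stated bound, so the stated inequality with the extra factor $\sqrt2$ follows immediately. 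Should one instead route the argument through the norm equivalence of Theorem~\ref{thm:charaterize_Lp}, the factor $\sqrt2$ appears from the $2^{1/p}$ term, but that route would also introduce the factor $\|M\|_2\|M^{-1}\|_2$. This is why I prefer the direct computation above.
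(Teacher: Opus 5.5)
Your proposal is correct. For items 1--7 it is essentially the paper's argument: everything is reduced to $u,v$ (and $z,w$) through \eqref{eq:Fourier_coef} and the classical results. Your readings of item 1 (right-hand side $\|F\|_{\tilde1}$) and item 2 ($a>0$) are the right repairs of evident typos.

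The routes differ in items 8 and 9.

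\textbf{Item 8.} The paper first proves $\|F\star G\|_{\tilde1}\le\|F\|_{\tilde1}\|G\|_{\tilde1}$ via Proposition~\ref{prop:inequality_norm_1} and Fubini--Tonelli. It then redoes the Fubini/translation computation by hand. You instead split $F\star G$ into four classical convolutions and quote Young's inequality and the classical convolution theorem. Your route is shorter; the paper's yields, as a byproduct, a Young inequality stated directly in the fuzzy norm.

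\textbf{Item 9.} The paper goes through Theorem~\ref{thm:charaterize_Lp} and reaches the constant $(2\pi)^{n/2}\sqrt2\,\|M\|_2\|M^{-1}\|_2$. It then claims $\|M\|_2=\|M^{-1}\|_2=1$ because $1$ is the only eigenvalue of $M$. That step is false for $a=m(A)\neq0$. $M$ is not normal, so its $2$-norm is its largest singular value, and $\|M\|_2=\|M^{-1}\|_2=\tfrac{|a|+\sqrt{a^2+4}}{2}>1$. Hence the paper's route does not deliver the stated constant.

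Your argument does. You view $\|\cdot\|_{\tilde2}$ as the $L^2$ norm of the pair $(u+av,v)$ and use $\widehat{u+av}=\hat u+a\hat v$. This gives the exact identity $\|\widehat F\|_{\tilde2}=(2\pi)^{n/2}\|F\|_{\tilde2}$, which proves the stated bound with room to spare.

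One adjustment: $F\in L^1$ is a standing hypothesis, so work in $L^1\cap L^2$ and apply the classical Plancherel theorem to $u+av$ and $v$. Do not invoke the fuzzy Plancherel extension, because the paper defines that extension only afterwards, using item 9 itself.
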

\begin{proof}
First, note that these 8 properties are satisfied for any $u,v \in L^1(\RR^n,\CC)$  \cite{Linares}.  Thus, using this fact together with Equation~\eqref{eq:Fourier_coef}, we obtain that the proof for the properties 1.--6. follow immediately. Property 7. is obtained by applying Proposition~\ref{prop:Basic_calculus_property}(g) to Equation~\eqref{eq:Fourier}.

Let $F=u + vA$ and $G=z + wA$ for some $u,v,z,w \in L^1(\RR^n,\CC)$. 
From Propositions~\ref{prop:inequality_norm_1} and \ref{prop:Basic_calculus_property} and from Fubini-Tonelli theorem, we have 
\begin{align*}
\|F\star G\|_{\tilde{1}} & \leq \int_{\RR^n}\int_{\RR^n} \|F(z)\odot G(\xi - z)\|_{\tilde{1}}\,dz\,dx \\
 & = \int_{\RR^n}\|F(z)\|_{\tilde{1}}\int_{\RR^n} \|G(\xi - z)\|_{\tilde{1}}\,d\xi\,dz \\
 & = \int_{\RR^n}\|F(z)\|_{\tilde{1}}\int_{\RR^n} \|G(\xi\|_{\tilde{1}}\,d\xi\,dz \\
 & = \|G\|_{\tilde{1}}\int_{\RR^n}\|F(z)\|_{\tilde{1}}\,dz \\
& = \|G\|_{\tilde{1}}\|F\|_{\tilde{1}}.
\end{align*}
This implies that $F\star G \in L^1(\RR^n,\CFA)$, and therefore its fuzzy Fourier transform is defined. 
Let $a = m(A)$. We obtain

{\small
\begin{align*}
\widehat{F\star G}(\omega) & = \int_{\RR^n}{(F\star G)(\xi)e^{-i\langle \omega,\xi\rangle}d\xi} \\ 
& = \int_{\RR^n}{\left(\int_{\RR^n}F(y)\odot G(\xi - y)dy\right) e^{-i\langle \omega,\xi\rangle}\, d\xi} \\ 
& = \int_{\RR^n}\int_{\RR^n}(u(y)z(\xi -y) -a^2v(y)w(\xi -y))e^{-i\langle \omega,\xi\rangle} \, d\xi dy + \\
& \quad +  A\int_{\RR^n}\int_{\RR^n}(u(y)w(\xi - y) + v(y)z(\xi - y) + 2av(y)w(\xi - y))e^{-i\langle \omega,\xi\rangle}\, d\xi dy \\ 
& = \int_{\RR^n}u(y)\int_{\RR^n}z(\xi -y)e^{-i\langle \omega,\xi\rangle}\, d\xi dy -a^2\int_{\RR^n}v(y)\int_{\RR^n}w(\xi -y)e^{-i\langle \omega,\xi\rangle}\, d\xi dy + \\
& \quad +  A\int_{\RR^n}u(y)\int_{\RR^n}w(\xi - y)e^{-i\langle \omega,\xi\rangle}\, d\xi dy + A\int_{\RR^n}v(y)\int_{\RR^n}z(\xi - y)e^{-i\langle \omega,\xi\rangle}\, d\xi dy \\ 
& \quad + 2aA\int_{\RR^n}v(y)\int_{\RR^n}w(\xi - y)e^{-i\langle \omega,\xi\rangle}\, d\xi dy \\ 
& = \int_{\RR^n}u(y) \widehat{z}(\omega)e^{-i\langle y,\omega\rangle}\, dy -a^2\int_{\RR^n}v(y)\widehat{w}(\omega)e^{-i\langle y,\omega\rangle}\, dy + \\
& \quad +  A\int_{\RR^n}u(y)\widehat{w}(\omega)e^{-i\langle y,\omega\rangle}\, dy + A\int_{\RR^n}v(y)\widehat{z}(\omega)e^{-i\langle y,\omega\rangle}\, dy \\ 
& \quad + 2aA\int_{\RR^n}v(y)\widehat{w}(\omega)e^{-i\langle y,\omega\rangle}\, dy \\ 
& = \widehat{u}(\omega)\widehat{z}(\omega)-a^2\widehat{v}(\omega)\widehat{w}(\omega) +   \left(\widehat{u}(\omega)\widehat{w} + \widehat{v}(\omega)\widehat{z}(\omega)+ 2a\widehat{v}(\omega)\widehat{w}(\omega)\right)A \\ 
& = \widehat{F}(\omega)\odot \widehat{G}(\omega)\,.
\end{align*}
}
This concludes the proof of Property 8.

Let $F = u + vA \in L^1(\RR^n,\CFA) \cap L^2(\RR^n,\CFA)$. This implies that $u,v \in L^1(\RR^n,\CC) \cap L^2(\RR^n,\CC)$ and, from Plancherel's theorem \cite{Linares}, we have $\widehat{u},\widehat{v} \in L^2(\RR^n,\CC)$ with 
\[
\|\widehat{u}\|_2 \leq (2\pi)^{\frac{n}{2}}\|{u}\|_2 \;\mbox{ and }\; \|\widehat{v}\|_2 \leq (2\pi)^{\frac{n}{2}}\|{v}\|_2\,.
\]
From Theorem~\ref{thm:charaterize_Lp}, we obtain 
\begin{align*}
\|\widehat{F}\|_{\tilde{2}} & \leq 2^{\frac{1}{2}}\|M\|_2 \max\{\|\widehat{u}\|_2,\|\widehat{v}\|_2\} \\
&\leq (2\pi)^{\frac{n}{2}}2^{\frac{1}{2}}\|M\|_2 \max\{\|{u}\|_2,\|{v}\|_2\} \\
&\leq (2\pi)^{\frac{n}{2}}2^{\frac{1}{2}}\|M\|_2\|M^{-1}\|_2 \|{F}\|_{\tilde{2}}\,,
\end{align*}
where $M$ is the matrix given by Equation~\eqref{eq:M}. Note that $\|M\|_2 = \|M^{-1}\|_2 =1$, since $1$ is the unique eigenvalue of $M$ and $M^{-1}$. Thus, we conclude that 
\[
\|\widehat{F}\|_{\tilde{2}} \leq (2\pi)^{\frac{n}{2}}\sqrt{2}\|{F}\|_{\tilde{2}}\,.
\]
\end{proof}

Property 9. of Proposition~\ref{prop:Prop_Fourier} ensures that 
the fuzzy Fourier transform regarded as an operator defined from   $L^1(\RR^n,\CFA)\cap L^2(\RR^n,\CFA)$ to $L^2(\RR^n,\CFA)$ is uniformly continuous. Since $L^1(\RR^n,\CC)\cap L^2(\RR^n,\CC)$ is dense in $L^2(\RR^n,\CC)$, from Theorem~\ref{thm:charaterize_Lp}, it follows that 
$L^1(\RR^n,\CFA)\cap L^2(\RR^n,\CFA)$ is dense in $L^2(\RR^n,\CFA).$ 
Since $L^2(\RR^n,\CFA)$ is a Banach space, the fuzzy Fourier transform can be uniformly extended to the entire domain $L^2(\RR^n,\CFA)$ as follows. 

\begin{definition}[Fourier transform in $L^{2}(\RR^n,\CFA)$]
For every $F \in L^{2}(\RR^n,\CFA)$, the fuzzy Fourier(-Plancherel) transform of $F$ is the function $\widehat{F} \in L^{2}(\RR^n,\CFA)$ given by 
\begin{equation}
\lim_{n\to \infty} \|\widehat{F_n} - \widehat{F}\|_{\tilde{2}} = 0   
\end{equation}
for any sequence $(F_n)$ in $L^1(\RR^n,\CFA)\cap L^2(\RR^n,\CFA)$ that converges to $F$ in the norm $\|\cdot\|_{\tilde{2}}.$
\end{definition}

Note that the definition of fuzzy Fourier transform in $L^2(\RR^n,\CFA)$, also called fuzzy Fourier-Plancherel transform, is similar to the definition of the classical Fourier(-Plancherel) transform in $L^2(\RR^n,\CC)$ \cite{Linares},  which is obtained by a uniform extension. 
Again, by Theorem~\ref{thm:charaterize_Lp} and the properties of the fuzzy integral, the fuzzy Fourier-Plancherel transform of a function $F = u+ vA$, with $u,v \in L^2(\RR^n,\CFA)$, is also given by Equation~\eqref{eq:Fourier_coef}; however, in this case, $\widehat{u}$ and $\widehat{v}$ denote the classical Fourier–Plancherel transforms of $u$ and $v$, respectively. This allows us to establish fuzzy counterparts of some well-known results for the Fourier transform on $L^2(\RR^n,\CC)$. 

\begin{theorem}\label{thm:prop_Fourier_L2}
Consider the Fourier-Plancherel transform defined in $L^2(\RR^n,\CFA)$ and $F \in L^2(\RR^n,\CFA)$. 
\begin{itemize}
\item[1.] The Fourier-Plancherel transform of $F$ is given by 
\begin{equation}\label{eq:fourier_L2}
\widehat{F}(\omega) = \lim_{q\to\infty} \displaystyle \int_{\lvert\xi\rvert < q}   F(\xi)e^{-i\langle \omega,\xi\rangle}\,d\xi 
\end{equation}
where $\lvert \xi \rvert$ denotes the Euclidean norm of $\xi \in \RR^n.$
\item[2.] Inverse of the Fourier-Plancherel transform: 
\[
F(\xi) = (2\pi)^{-n}\widehat{\widehat{F}}(-\xi)
\]
for any $\xi \in \RR^n.$ 
\item[3.] If $F\in C^1(\RR^n,\CFA)$ and $\frac{\partial}{\partial \xi_i}F \in L^2(\RR^n,\CFA)$, $i\in \{1,\ldots,n\}$, then 
    \begin{equation}
        \widehat{\frac{\partial}{\partial \xi_i}F}(\omega) = i\xi_i\widehat{F}(\omega)
    \end{equation}
\item[4.] Characterization of the Sobolev space $H^k(\Omega, \CFA)$,  $k \in \mathbb{N} \cup \{0\}$: 
\begin{equation}
F \in H^k(\RR^n, \CFA) \Longleftrightarrow (1 + \lvert\xi \rvert ^2)^k\widehat{F}(\xi) \in L^2(\RR^n,\CFA)\,.
\end{equation}
\end{itemize}
\end{theorem}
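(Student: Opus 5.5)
The plan is to reduce every item to the classical Fourier--Plancherel theory on $L^2(\RR^n,\CC)$, coordinate by coordinate. Write $F=u+vA$ with $u,v\in L^2(\RR^n,\CC)$, which is possible by Theorem~\ref{thm:charaterize_Lp}. As noted before the statement, $\widehat F=\widehat u+\widehat vA$, where $\widehat u,\widehat v$ are the classical Plancherel transforms. This identity is justified by approximating $u,v$ in $L^2$ by sequences $u_m,v_m\in L^1\cap L^2$. Then $F_m=u_m+v_mA\to F$ in $\|\cdot\|_{\tilde 2}$ by the norm equivalence of Theorem~\ref{thm:charaterize_Lp}. Equation~\eqref{eq:Fourier_coef} holds for each $F_m$, and the limit passes through the same norm equivalence.

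For item 1, I would apply the classical statement $\widehat u=\lim_{q\to\infty}\int_{|\xi|<q}u(\xi)e^{-i\langle\omega,\xi\rangle}d\xi$, understood as a limit in $L^2$, and likewise for $v$. By Definition~\ref{def:integral_CFA}, the truncated fuzzy integral equals the truncated integral of $u$ plus the truncated integral of $v$ times $A$. Convergence in $\|\cdot\|_{\tilde 2}$ is then equivalent to convergence of both coefficients, again by Theorem~\ref{thm:charaterize_Lp}. The limit in Equation~\eqref{eq:fourier_L2} must therefore be read in $L^2(\RR^n,\CFA)$ (a.e.), and I will state this explicitly.

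For item 2, apply the transform twice using the coefficient formula: $\widehat{\widehat F}=\widehat{\widehat u}+\widehat{\widehat v}A$. The classical inversion $u(\xi)=(2\pi)^{-n}\widehat{\widehat u}(-\xi)$ a.e. holds for each coefficient. Recombining with the scalar $(2\pi)^{-n}\in\CC$ uses only the vector space structure of Theorem~\ref{cor:banach}.

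For item 3, Proposition~\ref{prop:Basic_calculus_property}(c) gives $\partial_i F=\partial_i u+(\partial_i v)A$, so $\partial_iu,\partial_iv\in L^2$. The classical identity $\widehat{\partial_i u}=i\omega_i\widehat u$ then gives the result, since multiplication by the complex scalar function $i\omega_i$ distributes over $u+vA$ by Equation~\eqref{eq:distributive_law}. The variable in the statement should be $\omega_i$; I would note this typo.

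For item 4, I would invoke Theorem~\ref{thm:characterization_Sobolev}: $F\in H^k(\RR^n,\CFA)$ if and only if $u,v\in H^k(\RR^n,\CC)$. Classically, this holds if and only if $(1+|\xi|^2)^{k/2}\widehat u$ and $(1+|\xi|^2)^{k/2}\widehat v$ lie in $L^2$. By Theorem~\ref{thm:charaterize_Lp}, that is equivalent to $(1+|\xi|^2)^{k/2}\widehat F\in L^2(\RR^n,\CFA)$.

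Item 4 is the main obstacle. The statement writes the exponent $k$, whereas the correct weight is $(1+|\xi|^2)^{k/2}$. With exponent $k$ the claimed equivalence characterizes $H^{2k}$ instead of $H^k$, so it fails as written (already for $k=1$). I would prove the version with $k/2$ and remark that the stated exponent must be corrected. A secondary point is that the classical characterization relies on weak derivatives, so I must check that the fuzzy weak derivative of $F$ corresponds to the pair of weak derivatives of $u,v$. This follows by testing Equation~\eqref{weak} against $\Phi=\phi+0A$ and $\Phi=0+\phi A$ and expanding with Equation~\eqref{eq:A_product}, which is the same computation as in Proposition~\ref{prop:weak_unique}.
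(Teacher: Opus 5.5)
Your proposal follows the paper's own argument, which is a one-line reduction: write $F=u+vA$, use $\widehat F=\widehat u+\widehat vA$ with the classical Fourier--Plancherel transforms, and transfer each classical $L^2(\RR^n,\CC)$ result through the norm equivalence of Theorem~\ref{thm:charaterize_Lp}. Your extra steps fill in details the paper leaves implicit, namely the approximation argument for the coefficient formula and the check that fuzzy weak derivatives are the componentwise weak derivatives.

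Your corrections to the statement are also right, and the paper does not notice them. The weight in item 4 must be $(1+\lvert\xi\rvert^2)^{k/2}$: for instance, $F(x)=e^{-\lvert x\rvert}A$ lies in $H^1(\RR,\CFA)$, while $(1+\omega^2)\widehat F(\omega)=2A\notin L^2(\RR,\CFA)$. Item 3 should read $i\omega_i\widehat F(\omega)$. Items 1 and 2 hold only as an $L^2$ limit and almost everywhere, respectively.
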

\begin{proof}
These results follow from the fact that $\widehat{F}(\omega) = \widehat{u}(\omega)  + \widehat{v}(\omega)A$, with $\widehat{u}, \widehat{v} \in L^2(\RR^n,\CC)$, together with the validity of these results for functions in $L^2(\RR^n,\CC)$ \cite{Linares}.    
\end{proof}
    
\begin{example} Consider $F(t)=e^{-\mid t\mid}A$, $t>0$. We write
$$
\begin{array}{llll}
\widehat{F}(\omega) &=&\displaystyle A\int_{-\infty}^{\infty}{e^{-\mid t \mid}e^{-i\omega t}dt}\\
&=&\displaystyle A\left[\int_{-\infty}^{0}e^{t(1-i\omega t)}dt \;\oplus\; \int_{0}^{\infty}e^{-t(1+i\omega t)}dt\right] \\
&=& \displaystyle \frac{2}{1+\omega^2}A.
\end{array}
$$
\end{example}

The Fourier transform can be employed to convert FPDEs into fuzzy ordinary differential equations (ODEs). To this end, the Fourier transform is applied with respect to one independent variable, thereby turning derivatives with respect to that variable into multiplication, as stated in item 5 of Proposition~\ref{prop:Prop_Fourier}.
However, in order to obtain a fuzzy ODE, the Fourier transform must commute with the partial derivative with respect to the other independent variable. This commutation can be ensured by imposing suitable regularity conditions on the solution of the corresponding FPDE. For instance, one may consider a fuzzy version of differentiation under the integral sign, by imposing assumptions analogous to those used in classical calculus on $\mathbb{R}^n$ (see, e.g., Theorem 2.27 in \cite{folland1999real}).
In contrast, the following theorem establishes weaker conditions under which the Fourier transform commutes with the weak derivative with respect to the other independent variable.

\begin{theorem}\label{thm:dif_int_sign}
Let $I = [a,b]$ and $F \in \FP(I\times \RR^n,\CFA)$ be such that $F(\cdot,\xi) \in W^{1,1}(I,\CFA)$ (or in $H^1(I,\CFA)$)  for all $\xi \in \RR^n$. 
For all $t \in I$, suppose that $F(t,\cdot),\frac{\partial}{\partial t}F(t,\cdot) \in L^1(\RR^n,\CFA)$ (or in $L^2(\RR^n,\CFA)$) and that $G(t,\omega)$ is given by the Fourier transform of $F(t,\cdot)$ at $\omega$ for all $t \in I$, that is,  
\[
G(t,\omega) = \int_{\RR^n}{F(t,\xi)e^{-i\langle \omega,\xi\rangle}d\xi}\,.
\]
If $G(\cdot,\omega) \in L^1_{loc}(I,\CFA)$, then the weak derivative of $G$ with respect to $t$ is   
\begin{equation}
\frac{\partial}{\partial t}G(t,\omega) = \int_{\RR^n}{\frac{\partial}{\partial t}F(t,\xi)e^{-i\langle \omega,\xi\rangle}d\xi}\,.
\end{equation}
for all $t \in (a,b).$
\end{theorem}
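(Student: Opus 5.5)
The plan is to verify the defining identity \eqref{weak} in the variable $t$ on $(a,b)$ with a Fubini interchange. Throughout, fix $\omega\in\RR^n$ and set
\[
H(t,\omega)=\int_{\RR^n}\frac{\partial}{\partial t}F(t,\xi)e^{-i\langle \omega,\xi\rangle}d\xi ,
\]
which is defined for each $t$ by hypothesis. First I would reduce to the complex case. Write $F=u+vA$ with $u,v:I\times\RR^n\to\CC$. By Theorem~\ref{thm:characterization_Sobolev} and Proposition~\ref{prop:weak_unique}, the weak $t$-derivative of $F(\cdot,\xi)$ is $\partial_t u+(\partial_t v)A$, with $u(\cdot,\xi),v(\cdot,\xi)\in W^{1,1}(I,\CC)$. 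Equation~\eqref{eq:Fourier_coef} gives $G=\hat u+\hat vA$ and $H=\widehat{\partial_t u}+\widehat{\partial_t v}A$.

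Next I would reduce the test functions. It suffices to test against $\Phi=\phi+0A$ with $\phi\in C_c^\infty((a,b),\CC)$. Indeed, a general $\Phi=\phi_1+\phi_2A$ satisfies $\Phi=\phi_1\oplus\phi_2A$, and by \eqref{eq:A_product} we have $X\odot(\phi_2A)=(\phi_2 A)\odot X$, which is linear in $X$ with constant coefficients. Together with distributivity and Proposition~\ref{prop:Basic_calculus_property}(g), the identity for scalar $\phi$ therefore propagates to general $\Phi$. For scalar $\phi$ we have $X\odot\phi=\phi X$, so \eqref{weak} reduces to two scalar identities, one for $u$ and one for $v$. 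For $u$ the identity reads
\[
\int_a^b \hat u(t,\omega)\phi'(t)\,dt=-\int_a^b \widehat{\partial_t u}(t,\omega)\phi(t)\,dt .
\]

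To prove this, I expand the left side as $\int_a^b\int_{\RR^n}u(t,\xi)e^{-i\langle\omega,\xi\rangle}\phi'(t)\,d\xi\,dt$ and swap the integrals by Fubini. The inner $t$-integral is $\int_a^b u(t,\xi)\phi'(t)\,dt$, which equals $-\int_a^b \partial_t u(t,\xi)\phi(t)\,dt$ by the scalar weak-derivative identity for $u(\cdot,\xi)\in W^{1,1}(I,\CC)$. Swapping back gives the right side. The argument for $v$ is identical. In the $L^2$ variant I would use the truncated integrals of Theorem~\ref{thm:prop_Fourier_L2}(1) over $\{\lvert\xi\rvert<q\}$, apply the same argument on the bounded set, and then pass $q\to\infty$.

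The main obstacle is justifying the two Fubini interchanges. They require $u,\partial_t u\in L^1(I\times\RR^n)$, that is, $\int_a^b\|F(t,\cdot)\|_{\tilde 1}\,dt<\infty$ and the analogous bound for $\partial_tF$, together with joint measurability. Pointwise-in-$t$ integrability, as stated in the hypotheses, does not give this directly. I would first check whether the assumption $G(\cdot,\omega)\in L^1_{loc}$ helps. If it does not, I would state joint integrability on $I\times\RR^n$ (or the $L^2$ analogue with $\phi$ bounded and $I$ compact, via Cauchy–Schwarz in $t$) as the natural implicit hypothesis that makes the interchange legitimate. In the $L^2$ case, the limit $q\to\infty$ would be controlled by $L^2(\RR^n)$ convergence of the truncated transforms, uniformly integrated against $\phi$ in $t$, using dominated convergence.
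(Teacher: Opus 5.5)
Your argument is the paper's argument. The paper also fixes $\omega$, pulls $\Phi'(t)\odot(\cdot)$ inside the $\xi$-integral, swaps the $t$- and $\xi$-integrals, applies the weak-derivative identity of $F(\cdot,\xi)$ for each fixed $\xi$, and swaps back. Your reduction to the components $u,v$ and to scalar test functions only repackages that computation.

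The paper performs both interchanges with no justification, so the gap you flag is present in its proof as well. Your open question has a negative answer: $G(\cdot,\omega)\in L^1_{loc}$ cannot close the gap, because the statement as written is false. Take $n=1$, $I=[0,1]$, a real $\phi\in C_c^\infty(\RR)$ with $\int_{\RR}\phi=1$, $c(t)=1/(t-\tfrac12)$, and
\[
F(t,\xi)=\phi\bigl(\xi-c(t)\bigr)\ \text{ for } t>\tfrac12,\qquad F(t,\xi)=0\ \text{ for } t\le\tfrac12,
\]
viewed in $\CFA$ as $F+0A$. All hypotheses hold:
\begin{itemize}
\item For each fixed $\xi$, $F(\cdot,\xi)$ vanishes on a neighbourhood of $[0,\tfrac12]$. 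Hence $F\in C^\infty(I\times\RR)$ and $F(\cdot,\xi)\in W^{1,1}(I)\cap H^1(I)$.
\item For every $t$, $F(t,\cdot)$ and $\partial_tF(t,\cdot)=(t-\tfrac12)^{-2}\phi'(\cdot-c(t))$ (zero for $t\le\tfrac12$) lie in $L^1(\RR)\cap L^2(\RR)$.
\item $\lvert G(t,\omega)\rvert\le\|\phi\|_{L^1}$, so $G(\cdot,\omega)\in L^1_{loc}$ for every $\omega$.
\end{itemize}
Yet $G(t,0)=\mathbf{1}_{\{t>1/2\}}$ has distributional derivative $\delta_{1/2}$, whereas $\int_{\RR}\partial_tF(t,\xi)\,d\xi=0$ for every $t$. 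Concretely, for a test function $\eta$ with $\eta(\tfrac12)\neq0$ you get $\int_0^1G(t,0)\eta'(t)\,dt=-\eta(\tfrac12)\neq0$.

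The mechanism is mass escaping to $\xi=+\infty$ as $t\downarrow\tfrac12$. Here $F\in L^1(I\times\RR)$, but $\|\partial_tF(t,\cdot)\|_{L^1}=\|\phi'\|_{L^1}(t-\tfrac12)^{-2}$ is not integrable near $t=\tfrac12$. That is exactly the joint integrability you identified as missing, so a hypothesis of the kind you propose has to be added to the theorem.

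A sufficient one is joint measurability together with $\int_K\|\partial_tF(t,\cdot)\|_{\tilde{1}}\,dt<\infty$ for every compact $K\subset(a,b)$. This also gives local joint integrability of $F$, through $F(t,\xi)=F(t_0,\xi)+\int_{t_0}^t\partial_sF(s,\xi)\,ds$. It also makes the assumption on $G$ automatic. With it, your proof is complete.

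For the $L^2$ variant there is one more issue. $G(t,\cdot)$ and the truncated integrals are only controlled in $L^2(d\omega)$, so ``fix $\omega$ and let $q\to\infty$'' is not justified pointwise. The conclusion can then only be asserted for almost every $\omega$.
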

\begin{proof}
In the classical case, the proof of this result is often viewed as an immediate consequence of other results, left as an exercise, or assumed to be obvious \cite{cheng2006differentiation,schwartz1957theorie}. For the sake of clarity, we include the proof below, which follows the same idea as in the classical case.  

Given $\Phi \in C_{c}^{\infty}(I,\CFA)$. From Propositions~\ref{prop:Basic_calculus_property} and \ref{partes}, we have 
\begin{align*}
& \int_{I} \Phi'(t) \odot G(t,\omega)   \;dt  \\ 
& = \int_{I} \Phi'(t) \odot  \left(\int_{\RR^n}{F(t,\xi)e^{-i\langle \omega,\xi\rangle}d\xi} \right) \;dt\, \\ 
& = \int_{I} \int_{\RR^n}\Phi'(t) \odot F(t,\xi)e^{-i\langle \omega,\xi\rangle} \, d\xi\,dt \\
& = \int_{\RR^n} e^{-i\langle \omega,\xi\rangle} \left(\int_{I} \Phi'(t) \odot F(t,\xi) \,dt\right) \, d\xi \\
& = - \int_{\RR^n} e^{-i\langle \omega,\xi\rangle} \left(\int_{I} \Phi(t) \odot \frac{\partial}{\partial t}F(t,\xi) \,dt\right) \, d\xi \\
& = - \int_{\RR^n} \int_{I} \Phi(t) \odot \frac{\partial}{\partial t}F(t,\xi)e^{-i\langle \omega,\xi\rangle} \,dt \, d\xi \\
& = -  \int_{I} \Phi(t) \odot \left(\int_{\RR^n} \frac{\partial}{\partial t}F(t,\xi)e^{-i\langle \omega,\xi\rangle} \, d\xi \right) \,dt \,.
\end{align*}
Since $\Phi$ was taken arbitrarily, we conclude that the weak derivative of $G$ with respect to $t$ is the Fourier transform of $\frac{\partial}{\partial t}F(t,\cdot)$. 

The proof for the case where $F(t,\cdot),\frac{\partial}{\partial t}F(t,\cdot) \in L^2(\RR^n,\CFA)$ is similar, but it uses Equation~\eqref{eq:fourier_L2} instead.  
\end{proof}

It is worth noting that the condition $G(\cdot,\omega) \in L^1_{loc}(I,\CFA)$ in Theorem~\ref{thm:dif_int_sign} can be replaced by more a restrictive condition such as $G(\cdot,\omega) \in C^0(I,\CFA)$ or $G(\cdot,\omega) \in L^p(I,\CFA)$, $1\leq p < \infty$, since these spaces are subsets of $L^1_{loc}(I,\CFA)$.

This establishes the fundamental framework for Fourier analysis in the space of $A$-linearly interactive fuzzy complex processes. With these tools in place, we now turn to investigate their applications to partial differential equations, focusing specifically on the fuzzy heat equation and the fuzzy Schr\"odinger equation in the following section.

\section{Applications to Fuzzy Partial Differential Equations}

In this section, we illustrate how the theoretical framework developed in the previous sections can be applied to analyze fuzzy partial differential equations (PDEs). 
Here, for the sake of simplicity, as usual in the literature of PDE, we denote  $\frac{\partial}{\partial x}u = u_x$, $\frac{\partial}{\partial t}u = u_t$, $\frac{\partial^2}{\partial x^2}u = u_{xx}$, $\frac{\partial^2}{\partial t^2}u = u_{tt}$, and $\frac{\partial^2}{\partial x \partial t}u = u_{xt}\,.$  
Consider the following linear fuzzy differential equation:
\begin{equation}\label{eq:linear_fpde}
B\odot u_{xx} \oplus C\odot u_{tt} \oplus D\odot u_{xt} \oplus E\odot u_{x}  \oplus F\odot u_{t} = g     
\end{equation}
where $u,g \in \FP(\RR\times [0,T], \CFA)$, $u(x,t)$ is twice weakly partially differentiable, and $B,C,D,E,F \in \CFA$ with $\{0\} \neq \{B,D,E\}$ and $\{0\} \neq \{C,D,F\}$. Usually, $x$ represents the spatial variable and $t$ the time variable. 

Let us convert the fuzzy PDE~\eqref{eq:linear_fpde} into a fuzzy ODE in time space ($t$). To this end, we apply the Fourier transform in both side of Equation~\eqref{eq:linear_fpde} with respect to the spatial variable $x$.  
To ensure that the corresponding Fourier transforms exist it is enough to assume that $u_{ij}(\cdot,t),u_{i}(\cdot,t), u(\cdot,t), g$ belong to  $L^1(\RR,\CFA)$, or belong to  $L^2(\RR, \CFA)$ in the case of considering the Fourier-Plancherel transforms, for $i,j \in \{x,t\}$. 
Often, we assume that they belong to $L^2(\RR,\CFA)$, because, among other advantages of this space, we can exploit its characterization in terms of the Fourier transform (see item 4 of Theorem~\ref{thm:prop_Fourier_L2}). 
In both cases, from the linearity of the fuzzy Fourier transform, we have 
\begin{equation}\label{eq:linear_fpde_fourier}
-\omega^2 B\odot \hat{u} \oplus C \odot \widehat{u_{tt}} \oplus -i\omega D\odot \widehat{u_{t}} \oplus -i\omega E\odot \hat{u}  \oplus F\odot \widehat{u_{t}} = \widehat{g}     
\end{equation}
where $\hat{u} \equiv \hat{u}(\omega,t)$.  

To obtain a fuzzy ODE, the Fourier transform must commute with the partial derivative with respect to $t$. 
To this end, in view of Theorem~\ref{thm:dif_int_sign}, we assume additional regularity conditions on $u$. Specifically, if $C \neq 0$, then we assume $u(x,\cdot) \in H^2([0,T],\CFA)$ and $\widehat{u_{tt}}(\omega,\cdot) \in L^1_{loc}([0,T],\CFA)$; if $C = 0$, we assume that $u(x,\cdot) \in L^2([0,T],\CFA).$ Moreover, we assume that $\widehat{u_{t}}(\omega,\cdot) \in L^1_{loc}([0,T],\CFA)$ if $D\neq 0$ or $F\neq 0$.
Under these conditions, we obtain the following linear fuzzy ordinary differential equation:
\begin{align}\label{eq:linear_ode_fourier} \nonumber
& -\omega^2 B\odot \hat{u} \oplus C \odot \widehat{u}_{tt} \oplus -i\omega D\odot \widehat{u}_{t} \oplus -i\omega E\odot \hat{u}  \oplus F\odot \widehat{u}_{t} = \widehat{g} \\
\Rightarrow & C \odot \widehat{u}_{tt} \oplus \left(-i\omega D\oplus F\right)\odot \widehat{u}_{t}  \oplus  \left(-\omega^2 B  \oplus -i\omega E\right)\odot \hat{u}  = \widehat{g}\,.
\end{align}

When initial conditions and/or boundary conditions are given, we may impose similar regularity assumptions on them to apply the Fourier transform. This, in turn, may require additional hypotheses on the solution $u$. 
To illustrate the use of the Fourier transform in solving FPDEs within the present framework, we focus on the fuzzy versions of the classical heat and  Schr\"odinger equations. These fundamental equations of mathematical physics can be adapted to the fuzzy setting to model diffusion and wave-like phenomena under uncertainty.

To solve these fuzzy PDEs, we adopt a decomposition strategy that leverages the linear structure of $\CFA$. Both the initial condition and the solution are expressed in the form
\[
F(x) = r_F(x) + q_F(x)A, \quad \text{and} \quad u(x,t) = r(x,t) + q(x,t)A,
\]
where $r_F$, $q_F$, $r$, and $q$ are real-valued functions in heat equation and complex-valued functions in Schr\"odinger equation. 

It is worth noting that these equations have studied in the frameworks of $\RFA$ and $\CFA$. In \cite{de2021differential}, the authors consider fuzzy heat equations with real parameters. The case with fuzzy parameters in $\RFA$ was addressed in \cite{shen2025solutions} and \cite{Mina}. However, the author of  \cite{shen2025solutions} consider the $A$-cross product, whereas the authors in \cite{Mina} employ a general class of multiplication in $\RFA$, called distributive product. The fuzzy Schr\"odinger equation in the $\CFA$ setting was studied in \cite{Salgado}. In contrast to these previous works, our aim here is to illustrate the use of the Fourier transform and to analyze the regularity of solutions of these fuzzy PDEs. For the sake of simplicity, we focus on the real coefficients. This representation allows us to separate the fuzzy PDE into two classical PDEs for the components $r$ and $q$, which are then treated using classical Fourier analysis.

\subsection{Fuzzy Heat Equation}

The heat equation (or thermal diffusion equation) describes how heat propagates through a medium over time. We consider the follwowing problem for the fuzzy heat equation. Given $F:\mathbb{R} \longrightarrow \CFA$, determine $u: \RR \times [0,t] \longrightarrow \CFA$, such that 
\begin{equation}\label{heat}
\left\{
\begin{array}{llll}
u_t=ku_{xx}\\
u(x,0)=F(x)
\end{array}
\right.
\end{equation}
where $k \in \RR.$

Suppose that the solution $u$ is such that $u(\cdot,t) \in H^2(\mathbb{R}, \CFA)$ for every $t$. Will also assume that $u(x,\cdot) \in L^2([0,T], \CFA)$ and $\widehat{u_{t}}(\omega,\cdot) \in L^1_{loc}([0,T],\CFA).$ Finally, for the initial condition to make sense,we require that  
$F \in L^2(\mathbb{R}, \CFA)$.

Considering $F(x)=r_{F}(x)+q_{F}(x)A$ with $r_{F},q_{F}: \mathbb{R} \longrightarrow \mathbb{C}$ and $u(x,t)=r(x,t)+q(x,t)A$, where $r,q: \mathbb{R} \times [0,t) \longrightarrow \mathbb{C}$ we have:

\begin{equation}\label{q}
\left\{
\begin{array}{llll} 
q_t=kq_{xx}\\
q(x,0)=q_F(x)
\end{array}
\right.
\end{equation} 

\noindent and \begin{equation}\label{r}
\left\{
\begin{array}{llll}
r_t=kr_{xx}\\
r(x,0)=r_F(x).
\end{array}
\right.
\end{equation}
Applying the Fourier transform to the Equation (\ref{q}), we have

\begin{equation}
\widehat{q}_{t}(\xi,t)=k\widehat{q}_{xx}(\xi,t)
\end{equation}

\noindent i.e,

\begin{equation} \label{two}
\widehat{q}_{t}=k(i\xi)^2\widehat{q}=-k\xi^2\widehat{q}.
\end{equation}

Solving (\ref{two}) and using the initial condition $q(x,0)=q_{F}(x)$,

\begin{equation}
\widehat{q}(\xi,t)=\widehat{q}_{F}(\xi)e^{-k\xi^2t}.
\end{equation}

Therefore, 
\begin{equation*}
\begin{array}{lll}
q(x,t)&=&\displaystyle \frac{1}{2 \pi}\int_{\mathbb{R}}e^{ix \xi}\widehat{q}(\xi,t)d\xi\\
&=&\displaystyle \frac{1}{2 \pi}\int_{-\infty}^{+\infty}e^{ix \xi}\widehat{q}_{F}(\xi)e^{-k\xi^2t}d \xi\\
&=&\displaystyle \frac{1}{2 \pi}\int_{-\infty}^{+\infty}e^{ix \xi}\left[\int_{-\infty}^{+\infty}e^{-iy \xi}q_{F}(y)dy\right]e^{-k\xi^2t}d \xi\\
&=&\displaystyle \frac{1}{2 \pi}\int_{-\infty}^{+\infty}q_{F}(y)\left[\int_{-\infty}^{+\infty}e^{-i(y-x) \xi}e^{-k\xi^2t}d \xi\right]dy\\
&=&\displaystyle \frac{1}{2 \pi}\int_{-\infty}^{+\infty}q_{F}(y)\widehat{f}(y-x)dy\\
\end{array}
\end{equation*}
where $f(\xi)=e^{-k\xi^2t}$. If $f(\xi)=e^{-k\xi^2t}$ implies $\widehat{f}(z)=\displaystyle \sqrt{\frac{\pi}{kt}}e^{-\frac{(x-y)^2}{4kt}}$, and consequently, the solution of the initial-value problem (\ref{q}) is given by

\begin{equation}
q(x,t)=\displaystyle \frac{1}{\sqrt{4\pi kt }}\int_{-\infty}^{+\infty}q_{F}(y)e^{-\frac{(x-y)^2}{4kt}}dy
\end{equation}
for $t>0$.

Similarly, the solution of the initial-value problem (\ref{r}) is given by

\begin{equation}
r(x,t)=\displaystyle \frac{1}{\sqrt{4\pi kt }}\int_{-\infty}^{+\infty}r_{F}(y)e^{-\frac{(x-y)^2}{4kt}}dy
\end{equation}
for $t>0$.

Therefore, the solution of the (\ref{heat}) is given by
\begin{equation}\label{sol}
u(x,t)= \displaystyle \frac{1}{\sqrt{4\pi kt }}\int_{-\infty}^{+\infty}r_{F}(y)e^{-\frac{(x-y)^2}{4kt}}dy+\left(\displaystyle \frac{1}{\sqrt{4\pi kt }}\int_{-\infty}^{+\infty}q_{F}(y)e^{-\frac{(x-y)^2}{4kt}}dy\right)A
\end{equation}
for $t>0$.

Hence, if $F \in L^2(\RR, \CFA)$, the heat equation \eqref{heat} admits a unique solution $u$ that satisfies the hypotheses above: $u(\cdot,t) \in H^2(\mathbb{R}, \CFA)$ for every $t$ and $u(x,\cdot) \in L^2([0,T], \CFA)$ and $\widehat{u_{t}}(\omega,\cdot) \in L^1_{loc}([0,T],\CFA).$
One can easi\-ly verify that the solution \eqref{sol} satisfies all these requirements. In fact, one can show that the solution $u$  satisfies a more restrictive property: the mapping $t\mapsto u(\cdot,t)$ is a continuous operator from $[0,T]$ into $H^2(\RR, \CFA)$ and a continuously (Fr\'echet) differentiable from $(0,T)$ into $L^2(\mathbb{R}, \CFA).$ This is often denoted by $u \in C^0([0,T]; H^2(\RR, \CFA) \cap 
C^1([0,T]; L^2(\RR, \CFA).$

\subsection{Fuzzy Schr\"odinger Equation}

The Schr\"odinger equation is the fundamental equation of non-relativistic quantum mechanics, describing the time evolution of quantum systems.
Given  $F:\mathbb{R} \longrightarrow \CFA$, the fuzzy Schr\"odinger equation is 
\begin{equation}\label{SC}
\left\{
\begin{array}{llll}
u_{xx}=-iu_{t}\\
u(x,0)=F(x)
\end{array}
\right.
\end{equation}
where $u: \mathbb{R} \times [0,T] \longrightarrow \CFA$.  

Considering $F(x)=r_{F}(x)+q_{F}(x)A$ with $r_{F},q_{F}: \mathbb{R} \longrightarrow \mathbb{C}$ and $u(x,t)=r(x,t)+q(x,t)A$, where $r,q: \mathbb{R} \times [0,T] \longrightarrow \mathbb{C}$, we have:
\begin{equation}\label{q1}
\left\{
\begin{array}{llll} 
q_{xx}=-iq_{t}\\
q(x,0)=q_F(x)
\end{array}
\right.
\end{equation} 
\noindent and 
\begin{equation}\label{r1}
\left\{
\begin{array}{llll}
r_{xx}=-ir_{t}\\
r(x,0)=r_F(x).
\end{array}
\right.
\end{equation}
Applying the Fourier transform to the Equation~\eqref{q1}, we obtain 
\begin{equation}
-x^2\widehat{q}(x,t)=-i\displaystyle \frac{\partial }{\partial t} \widehat{q}(x,t).
\end{equation}
So, 
\begin{equation}
\displaystyle \frac{\partial }{\partial t} \widehat{q}(x,t)=\frac{x^2}{i} \widehat{q}(x,t)=-ix^2\widehat{q}(x,t)
\end{equation}
i.e,
\begin{equation}
\widehat{q}(x,t)=e^{-ix^2t}c(x)
\end{equation}
where $c(x)=\widehat{q}(x,0)=\widehat{q_{F}}(x).$
Since we know $\widehat{q}(x,t)$, by the Fourier inversion formula we have:
\begin{equation}
\begin{array}{lllllllllllll}
q(x,t)&=&\displaystyle \frac{1}{2\pi}\int_{-\infty}^{+\infty}e^{ixy}\widehat{q}(y,t)dy\\
&=&\displaystyle \frac{1}{2\pi}\int_{-\infty}^{+\infty}e^{ixy}\left(e^{-iy^2t}\widehat{q_{F}}(y)\right)dy.
\end{array}
\end{equation}
Similarly, the solution of the initial-value problem (\ref{r1}) is given by
\begin{equation}
r(x,t)=\displaystyle \frac{1}{2\pi}\int_{-\infty}^{+\infty}e^{ixy}\left(e^{-iy^2t}\widehat{r_{F}}(y)\right)dy.
\end{equation}
Therefore, the solution of the (\ref{SC}) is given by
\begin{equation}\label{sol1}
u(x,t)=\displaystyle \frac{1}{2\pi}\int_{-\infty}^{+\infty}e^{ixy}\left(e^{-iy^2t}\widehat{r_{F}}(y)\right)dy+\left(\displaystyle \frac{1}{2\pi}\int_{-\infty}^{+\infty}e^{ixy}\left(e^{-iy^2t}\widehat{q_{F}}(y)\right)dy\right)A.
\end{equation}

Thus, for $F \in L^2(\RR, \CFA)$, the heat equation~\eqref{heat} admits a unique solution $u$ that satisfies the hypotheses: $u(\cdot,t) \in H^2(\mathbb{R}, \CFA)$ for every $t$ and $u(x,\cdot) \in L^2([0,T], \CFA)$ and $\widehat{u_{t}}(\omega,\cdot) \in L^1_{loc}([0,T],\CFA).$ Again,  one can show that the solution can be viewed as mapping $t\mapsto u(\cdot,t)$ is such that 
$$u \in C^0([0,T]; H^2(\RR, \CFA) \cap C^1([0,T]; L^2(\RR, \CFA).$$

\section{Final Remarks}

In this study, we introduced fuzzy Sobolev spaces and the fuzzy Fourier transform for $A$-linearly interactive fuzzy complex processes. The main motivation for developing these concepts lies in the theory of fuzzy partial differential equations for these class of fuzzy functions. In Section~\ref{sec:sobolev}, we defined the fuzzy Lebesgue space $L^p(\Omega,\CFA)$ and proved that it is a Banach space. Furthermore, we introduced the concept of fuzzy partial derivative in the weak sense, which allowed us to introduce the fuzzy Sobolev space $W^{k,p}(\Omega,\CFA)$ and show that it also forms a Banach space. The notion of Fourier transform was introduced for fuzzy spaces $L^1$ and $L^2$ and we prove several of its properties in Section~\ref{sec:Fourier}. Similar to classical case, the fuzzy Sobolev spaces based on $L^2$ can be completely characterized in terms of the fuzzy Fourier transform of their elements (see item 4 of Theorem~\ref{thm:prop_Fourier_L2}). The proposed framework can be used not only to solve but also to study the regularity of solutions of linear fuzzy PDEs. To illustrate the usefulness, we investigate explicit solutions of some fuzzy partial differential equations, such as the fuzzy heat equation and the fuzzy Schr\"odinger equation.

Our main goal is to address problems whose final solutions are  fuzzy number-valued functions, even though intermediate steps of the method (such as the application of the Laplace or Fourier transforms) may involve complex-valued expressions. This is analogous to the deterministic case, in which the solution of a real-valued differential equation often passes through the complex domain, using exponential functions and subsequently recovering real solutions via sine and cosine functions. 
One of the advantages of working with the spaces $\RFA$ or $\CFA$ is that, in addition to extending the sets of real and complex numbers, they also form Banach spaces in which $\RR$ and $\CC$ are vector subspaces, respectively.
As a consequence, many classical results can be naturally extended to these spaces. In particular, this is also the case for fuzzy Lebesgue and Sobolev spaces, for which we show that they are linearly homeomorphic to the Cartesian products of the corresponding classical Lebesgue and Sobolev spaces (see Theorems~\ref{thm:charaterize_Lp} and \ref{thm:characterization_Sobolev}, respectively). As a result, a fuzzy PDE can be translated into a classical system of PDEs via this linear homeomorphism, thereby providing a suitable framework for solving fuzzy PDEs analytically and numerically by means of classical methods. 

\section*{Acknowledgements}

This research was partially supported by FAPEMIG under grant no. APQ-02561-25 (S.A.B. Salgado), by FAPESP under grants no. 2020/09838-0 (E. Esmi), and by CNPq under grants no. 311976/2023-9 (E. Esmi) and 314885/2021-8 (L.C. Barros).


\end{document}